\numberwithin{equation}{section}
\newtheorem{theorem}{Theorem}[section]
\newtheorem{lemma}[theorem]{Lemma}
\theoremstyle{definition}
\newtheorem{remark}[theorem]{Remark}
\theoremstyle{definition}
\theoremstyle{definition}
\def\dashint{\operatorname%
{\,\,\text{\bf-}\kern-.98em\DOTSI\intop\ilimits@\!\!}}
\def\\det{\text{det}}
\def\.5{\frac{1}{2}}
\newcommand{\RN}[1]{%
  \textup{\uppercase\expandafter{\romannumeral#1}}%
}
\renewcommand{\epsilon}{\varepsilon}
\newcounter{marnote}
\begin{document}
\title[Asymptotics for the perfect conductivity problem ]{Asymptotics for the electric field when $M$-convex inclusions are close to the matrix boundary}

\author[Z.W. Zhao]{Zhiwen Zhao}

\address[Z.W. Zhao]{1. School of Mathematical Sciences, Beijing Normal University, Beijing 100875, China. }
\address{2. Bernoulli Institute for Mathematics, Computer Science and Artificial Intelligence, University of Groningen, PO Box 407, 9700 AK Groningen, The Netherlands.}
\email{zwzhao@mail.bnu.edu.cn.}

%\thanks{}
%\footnote{ }

\date{\today} % delete this line to display the current date

%%% BEGIN DOCUMENT

%\tableofcontents

\begin{abstract}
In the perfect conductivity problem of composites, the electric field may become arbitrarily large as $\varepsilon$, the distance between the inclusions and the matrix boundary, tends to zero. The main contribution of this paper lies in developing a clear and concise procedure to establish a boundary asymptotic formula of the concentration for perfect conductors with arbitrary shape in all dimensions, which explicitly exhibits the singularities of the blow-up factor $Q[\varphi]$ introduced in \cite{LX2017} by picking the boundary data $\varphi$ of $k$-order growth. In particular, the smoothness of inclusions required for at least $C^{3,1}$ in \cite{LLY2019} is weakened to $C^{2,\alpha}$, $0<\alpha<1$ here.
\end{abstract}

\maketitle
%\date{}
%\maketitle
%{\bf Abstract}

%{Keywords} boundary asymptotics, blow-up, $k$-order growth

%{\bf Mathematics Subject Classification(2010)} $35{\rm B}33 \cdot   35{\rm B}40 \cdot  35{\rm B}44$

\section{Introduction and main results}

It is well known that field concentrations appear widely in nature and industrial applications. These fields include extreme electric, heat fluxes and mechanical loads. Motivated by the issue of material failure initiation, in this paper we are devoted to the investigation of blow-up phenomena arising from high-contrast fiber-reinforced composites with the densely packed fibers. The key feature of the concentrated fields is that the blow-up comes from the narrow regions between fibers and the thin gaps between fibers and the matrix boundary. It is worth emphasizing that the latter is more interesting due to the interaction from the boundary data. Although there has made great progress in the engineering and mathematical literature since Babu\u{s}ka et al's famous work \cite{BASL1999} over the past two decades, accurate numerical computation of the concentrated field are still very hard for lack of fine characterization to develop an efficient numerical scheme. So, it is significantly important from a practical point of view to precisely describe the singular behavior of such high concentration.

In the context of electrostatics, the field is the gradient of a solution to the Laplace equation and the blow-up rate of the gradient were captured accurately. Denote the distance between two inclusions or between inclusions and the matrix boundary by $\varepsilon$. It has been proved that for the perfect conductivity problem, the blow-up rate of the gradient is $\varepsilon^{-1/2}$ in two dimensions \cite{AKLLL2007,BC1984,BLY2009,AKL2005,Y2007,Y2009,K1993}, while it is $|\varepsilon\ln\varepsilon|^{-1}$ in three dimensions \cite{BLY2009,LY2009,BLY2010,L2012}.

Besides these foregoing estimates of the singularities for the field, there is another direction of investigation to establish the asymptotic formula of $\nabla u$ in the thin gap of electric field concentration. In two dimensions, consider the following conductivity problem
\begin{align}\label{per001}
\begin{cases}
\Delta u=0,&\hbox{in}\;\mathbb{R}^{2}\setminus\overline{D_{1}\cup D_{2}},\\
u=C_{j}, &\hbox{on}\;\partial D_{j},\;j=1,2,\\
u(\mathbf{x})-H(\mathbf{x})=O(|\mathbf{x}|^{-1}),&\mathrm{as}\;|\mathbf{x}|\rightarrow\infty,\\
\int_{\partial D_{j}}\frac{\partial u}{\partial\nu}\big|_{+}=0,&j=1,2,
\end{cases}
\end{align}
where $H$ is a given harmonic function in $\mathbb{R}^{2}$ and
$$\frac{\partial u}{\partial\nu}\Big|_{+}:=\lim_{\tau\rightarrow0}\frac{u(x+\nu\tau)-u(x)}{\tau}.$$
Here and throughtout this paper $\nu$ is the unit outer normal of $D_{j}$ and the subscript $\pm$ shows the limit from outside and inside the domain, respectively. For problem (\ref{per001}), Kang, Lim and Yun \cite{KLY2013} obtained a complete characterization of the singularities of $\nabla u$ with $D_{1}$ and $D_{2}$ being disks as follows
\begin{align}\label{singular}
\nabla u(\mathbf{x})=\frac{2r_{1}r_{2}}{r_{1}+r_{2}}(\mathbf{n}\cdot\nabla H)(\mathbf{p})\nabla h(\mathbf{x})+\nabla g(\mathbf{x}),
\end{align}
where $h(\mathbf{x})=\frac{1}{2\pi}(\ln|\mathbf{x}-\mathbf{p}_{1}|-\ln|\mathbf{x}-\mathbf{p}_{2}|)$ with $\mathbf{p}_{1}\in D_{1}$ and $\mathbf{p}_{2}\in D_{2}$ being the fixed point of $R_{1}R_{2}$ and $R_{2}R_{1}$ respectively, $R_{j}$ is the reflection with respect to $\partial D_{j}$, $\mathbf{n}$ is the unit vector in the direction of $\mathbf{p}_{2}-\mathbf{p}_{1}$, $\mathbf{p}$ is the middle point of the shortest line segment connecting $\partial D_{1}$ and $\partial D_{2}$, and $|\nabla g|$ is bounded independently of $\varepsilon$ on any bounded subset of $\mathbb{R}^{2}\setminus\overline{D_{1}\cup D_{2}}$. Obviously $\nabla h$ characterizes the singular behavior of $\nabla u$ explicitly. Ammari, Ciraolo, Kang, Lee, Yun \cite{ACKLY2013} extended the characterization (\ref{singular}) to the case when inclusions $D_{1}$ and $D_{2}$ are strictly convex domains in $\mathbb{R}^{2}$ by utilizing disks osculating to convex domains. In three dimensions, Kang, Lim and Yun \cite{KLY2014} derived an asymptotic formula of $\nabla u$ for two spherical perfect conductors with the same radii. The asymptotics for perfectly conducting particles with the different radii can be seen in \cite{LWX2019}. Recently, a great work on establishing an asymptotic formula in dimensions two and three for two arbitrarily $2$-convex inclusions has been completed by Li, Li and Yang in \cite{LLY2019}. It is worth mentioning that for high-contrast composites with the matrix described by nonlinear constitutive laws such as $p$-Laplace, Gorb and Novikov \cite{G2012} captured the stress concentration factor. Additionally, the asymptotics of the eigenvalues of the Poincar\'{e} variational problem for two close-to-touching inclusions were obtained by Bonnetier and Triki in \cite{BT2013}. More related work can be seen in \cite{ABTV2015,AKLLZ2009,BCN2013,BLL2015,BLL2017,BT2012,BV2000,DL2019,G2015,KY2019,KLY2015,LLBY2014,LY2015,M1996,MMN2007,BJL2017,LX2017}.

However, to the best of our knowledge, previous investigations on the asymptotics of the field concentration only focused on the narrow region between inclusions. This paper, by contrast, aims at deriving a completely asymptotic characterization for the perfect conductivity problem (\ref{con002}) with $m$-convex inclusions close to the matrix boundary and the boundary data of $k$-order growth in all dimensions. The asymptotic results in this paper also provide an efficient way to compute the electrical field numerically.

To state our main works in a precise manner, we first describe our domain and notations. Let $D\subset\mathbb{R}^{n}\,(n\geq2)$ be a bounded domain with $C^{2,\alpha}~(0<\alpha<1)$ boundary, which has a $C^{2,\alpha}$-subdomain $D_{1}^{\ast}$ touching matrix boundary $\partial D$ only at one point. That is, by a translation and rotation of the coordinates, if necessary,
\begin{align*}
\partial D_{1}^{\ast}\cap\partial D=\{0'\}\subset\mathbb{R}^{n-1}.
\end{align*}
Throughout the paper, we use superscript prime to denote ($n-1$)-dimensional domains and variables, such as $\Sigma'$ and $x'$. After a translation, we set
\begin{align*}
D_{1}^{\varepsilon}:=D_{1}^{\ast}+(0',\varepsilon),
\end{align*}
where $\varepsilon>0$ is a sufficiently small constant. For the sake of simplicity, denote
\begin{align*}
D_{1}:=D_{1}^{\varepsilon},\quad\mathrm{and}\quad\Omega:=D\setminus\overline{D}_{1}.
\end{align*}

The conductivity problem with inclusions close to touching matrix boundary can be modeled by the following scalar equation with piecewise constant coefficients
\begin{align}\label{con001}
\begin{cases}
\mathrm{div}{(a_{k}(x)\nabla u)}=0,&\hbox{in}\;D,\\
u=\varphi, &\hbox{on}\;\partial{D},
\end{cases}
\end{align}
where
\begin{align*}
a_{k}(x)=
\begin{cases}
k\in[0,1)\cup(1,\infty],&\hbox{in}\;D_{1},\\
1,&\hbox{on}\;D\setminus D_{1}.
\end{cases}
\end{align*}
Actually, equation (\ref{con001}) can also be used to describe more physical phenomenon, such as dielectrics, magnetism, thermal conduction, chemical diffusion and flow in porous media.

When the conductivity of $D_{1}$ degenerates to be infinity, problem (\ref{con001}) turns into the perfect conductivity problem as follows
\begin{align}\label{con002}
\begin{cases}
\Delta u=0,&\hbox{in}\;D\setminus D_{1},\\
u=C_{1}, &\hbox{in}\;\overline{D}_{1},\\
\int_{\partial D_{1}}\frac{\partial u}{\partial\nu}\big|_{+}=0,\\
u=\varphi, &\mathrm{on}\;\partial D,
\end{cases}
\end{align}
where the free constant $C_{1}$ is determined later by the third line of (\ref{con002}). There has established the existence, uniqueness and regularity of weak solutions to (\ref{con002}) in \cite{BLY2009} with a minor modification. We further assume that there exists a small constant $R>0$ independent of $\varepsilon$, such that the portions of $\partial D$ and $\partial D_{1}$ near the origin can be written as
\begin{align*}
x_{n}=\varepsilon+h_{1}(x')\quad\mathrm{and}\quad x_{n}=h(x'),\quad\quad x'\in B_{2R}',
\end{align*}
where $h_{1}$ and $h$ satisfy that for $m\geq2$,
\begin{enumerate}
{\it\item[(\bf{\em H1})]
$h_{1}(x')-h(x')=\lambda|x'|^{m}+O(|x'|^{m+1}),$
\item[(\bf{\em H2})]
$|\nabla_{x'}^{i}h_{1}(x')|,\,|\nabla_{x'}^{i}h(x')|\leq \kappa_{1}|x'|^{m-i},\;\,i=1,2,$
\item[(\bf{\em H3})]
$\|h_{1}\|_{C^{2,\alpha}(B'_{2R})}+\|h\|_{C^{2,\alpha}(B'_{2R})}\leq \kappa_{2},$}
\end{enumerate}
where $\lambda$ and $\kappa_{j},j=1,2$, are three positive constants independent of $\varepsilon$.

To explicitly uncover the effect of boundary data $\varphi$ on the singularities of the field, we classify $\varphi\in C^{2}(\partial D)$ according to its parity as follows. Denote the bottom boundary of $\Omega_{R}$ by $\Gamma^{-}_{R}=\{x\in\mathbb{R}^{n}|\,x_{n}=h(x'),\,|x'|<R\}$. Suppose that for $x\in\Gamma^{-}_{R}$,
\begin{itemize}
{\it
\item[({\bf{\em S1}})] $\varphi$ satisfies the $k$-order growth condition, that is,
\begin{align*}
\varphi(x)=\eta\,|x'|^{k};
\end{align*}
\item[({\bf{\em S2}})] $\varphi$ is odd with respect to some $x_{i_{0}}$, $i_{0}\in\{1,\cdots,n-1\}$,}
\end{itemize}
where $\eta>0$ and $k>1$ is a positive integer.

For $z'\in B'_{R},\,0<t\leq2R$, denote
\begin{align*}
\Omega_{t}(z'):=&\left\{x\in \mathbb{R}^{n}~\big|~h(x')<x_{n}<\varepsilon+h_{1}(x'),~|x'-z'|<{t}\right\}.
\end{align*}
We will use the abbreviated notation $\Omega_{t}$ for the domain $\Omega_{t}(0')$. Before stating our main results, we first introduce two scalar auxiliary functions $\bar{u}\in C^{2}(\mathbb{R}^{n})$ and  $\bar{u}_{0}\in C^{2}(\mathbb{R}^{n})$ such that $\bar{u}=1$ on $\partial D_{1}$, $\bar{u}=0$ on $\partial D$ and
\begin{align}\label{con009}
\bar{u}(x)=\frac{x_{n}-h(x')}{\varepsilon+h_{1}(x')-h(x')},\;\,\mathrm{in}\;\,\Omega_{2R},\quad\;\|\bar{u}\|_{C^{2}(\Omega\setminus\Omega_{R})}\leq C,
\end{align}
and $\bar{u}_{0}=0$ on $\partial D_{1}$, $\bar{u}_{0}=\varphi(x)$ on $\partial D$, and
\begin{align}\label{con016}
\bar{u}_{0}=\varphi(x',h(x'))(1-\bar{u}),\;\,\mathrm{in}\;\Omega_{2R},\quad\;\|\bar{u}_{0}\|_{C^{2}(\Omega\setminus\Omega_{R})}\leq C.
\end{align}

To simplify notations used in the following, for $i=0$, and $i=k$, $k$ is the order of growth defined in ({\bf{\em S1}}), we denote
\begin{align*}
\rho_{i}(n,m;\varepsilon)=&
\begin{cases}
\varepsilon^{\frac{n+i-1}{m}-1},&m>n+i-1,\\
|\ln\varepsilon|,&m=n+i-1,\\
1,&m<n+i-1,
\end{cases}
\end{align*}
and
\begin{align*}
\Gamma_{m}^{n+i}=&
\begin{cases}
\Gamma\left(1-\frac{n+i-1}{m}\right)\Gamma\left(\frac{n+i-1}{m}\right),&m>n+i-1,\\
1,&m=n+i-1,
\end{cases}
\end{align*}
where $\Gamma(s)=\int^{+\infty}_{0}t^{s-1}e^{-t}\,dt$, $s>0$ is the Gamma function. Denote by $\omega_{n-1}$ the area of the surface of unit sphere in $(n-1)$-dimension. For $(z',z_{n})\in\Omega_{2R}$, denote
\begin{align}\label{ZZW666}
\delta(z'):=\varepsilon+h_{1}(z')-h(z').
\end{align}
Let $\Omega^{\ast}:=D\setminus\overline{D^{\ast}_{1}}$. We define a linear functional with respect to $\varphi$,
\begin{align}\label{linear001}
Q^{\ast}[\varphi]:=\int_{\partial D_{1}}\frac{\partial v_{0}^{\ast}}{\partial\nu},
\end{align}
where $v_{0}^{\ast}$ is a solution of the following problem:
\begin{align}\label{con003}
\begin{cases}
\Delta v_{0}^{\ast}=0,\quad\quad\;\,&\mathrm{in}\;\Omega^{\ast},\\
v_{0}^{\ast}=0,\quad\quad\;\,&\mathrm{on}\;\partial D_{1}^{\ast},\\
v_{0}^{\ast}=\varphi(x),\quad\;\,&\mathrm{on}\;\partial D.
\end{cases}
\end{align}
Note that the definition of $Q^{\ast}[\varphi]$ is valid under case ({\bf{\em S2}}) but only valid for $m<n+k-1$ under case ({\bf{\em S1}}). For $m<n-1$, define
\begin{align}\label{zz1}
a_{11}^{\ast}:=\int_{\Omega^{\ast}}|\nabla v_{1}^{\ast}|^{2},
\end{align}
where $v_{1}^{\ast}$ satisfies
\begin{equation}\label{con022}
\begin{cases}
\Delta v_{1}^{\ast}=0,\quad\;\,&\mathrm{in}\;\,\Omega^{\ast},\\
v_{1}^{\ast}=1,\quad\;\,&\mathrm{on}\;\,\partial D_{1}^{\ast}\setminus\{0\},\\
v_{1}^{\ast}=0,\quad\;\,&\mathrm{on}\;\,\partial D.
\end{cases}
\end{equation}

Unless otherwise stated, in what following $C$ represents a constant, whose values may vary from line to line, depending only on $\lambda$, $\kappa_{1},\kappa_{2},R$ and an upper bound of the $C^{2,\alpha}$ norms of $\partial D_{1}$ and $\partial D$, but not on $\varepsilon$. We also call a constant having such dependence a $universal$ $constant$. Without loss of generality, we set $\varphi(0)=0$. Otherwise, we substitute $u-\varphi(0)$ for $u$ throughout this paper. For simplicity of discussions, we assume that convexity index $m\geq2$ and growth order index $k>1$ are all positive integers in the following.
\begin{theorem}\label{thm001}
Assume that $D_{1}\subset D\subseteq\mathbb{R}^{n}\,(n\geq2)$ are defined as above, conditions ({\bf{\em H1}})--({\bf{\em H3}}) and ({\bf{\em S1}}) hold. Let $u\in H^{1}(D;\mathbb{R}^{n})\cap C^{1}(\overline{\Omega};\mathbb{R}^{n})$ be the solution of (\ref{con002}). Then for a sufficiently small $\varepsilon>0$ and $x\in\Omega_{R}$,

$(i)$ for $m\geq n+k-1$,
\begin{align*}
\nabla u=\frac{\eta\Gamma^{n+k}_{m}}{\lambda^{\frac{k}{m}}\Gamma^{n}_{m}}(1+O(r_{\varepsilon}))\rho_{k;0}(n,m;\varepsilon)\nabla\bar{u}+\nabla\bar{u}_{0}+O(\mathbf{1})\delta^{1-\frac{2}{m}}\|\varphi\|_{C^{2}(\partial D)};
\end{align*}

$(ii)$ for $n-1\leq m<n+k-1$, if $Q^{\ast}[\varphi]\neq0$,
\begin{align*}
\nabla u=\frac{m\lambda^{\frac{n-1}{m}}Q^{\ast}[\varphi]}{(n-1)\omega_{n-1}\Gamma^{n}_{m}}\frac{1+O(r_{\varepsilon})}{\rho_{0}(n,m;\varepsilon)}\nabla\bar{u}+\nabla\bar{u}_{0}+O(\mathbf{1})\delta^{1-\frac{2}{m}}\|\varphi\|_{C^{2}(\partial D)};
\end{align*}

$(iii)$ for $m<n-1$, if $Q^{\ast}[\varphi]\neq0$,
\begin{align*}
\nabla u=\frac{Q^{\ast}[\varphi]}{a_{11}^{\ast}}(1+O(r_{\varepsilon}))\nabla\bar{u}+\nabla\bar{u}_{0}+O(\mathbf{1})\delta^{1-\frac{2}{m}}\|\varphi\|_{C^{2}(\partial D)},
\end{align*}
where $\rho_{k;0}(n,m;\varepsilon)=\rho_{k}(n,m;\varepsilon)/\rho_{0}(n,m;\varepsilon)$, $\bar{u}$ and $\bar{u}_{0}$ are defined by (\ref{con009}) and (\ref{con016}), respectively, $\delta$ is defined by (\ref{ZZW666}), $a_{11}^{\ast}$ is defined by (\ref{zz1}), and
\begin{align}\label{CCC}
r_{\varepsilon}=&
\begin{cases}
\varepsilon^{\frac{1}{m}},&m>n+k,\\
\varepsilon^{\frac{1}{m}}|\ln\varepsilon|,&m=n+k,\\
|\ln\varepsilon|^{-1},&m=n+k-1,\\
\varepsilon^{\frac{n+k-1-m}{(n+k-1)(m+1)}},&n-1<m<n+k-1,\\
|\ln\varepsilon|^{-1},&m=n-1,\\
\max\{\varepsilon^{\frac{n+k-1-m}{(n+k-1)(m+1)}},\varepsilon^{\frac{1}{6}}\},&m<n-1.
\end{cases}
\end{align}

\end{theorem}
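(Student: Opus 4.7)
The plan is to start from the linear decomposition $u=C_{1}v_{1}+v_{0}$, where $v_{1},v_{0}$ are harmonic in $\Omega$ satisfying $v_{1}=1,\,v_{0}=0$ on $\partial D_{1}$ and $v_{1}=0,\,v_{0}=\varphi$ on $\partial D$. The flux condition $\int_{\partial D_{1}}\partial u/\partial\nu\big|_{+}=0$ then forces
\begin{equation*}
C_{1}=\frac{Q[\varphi]}{a_{11}},\qquad a_{11}:=\int_{\Omega}|\nabla v_{1}|^{2},\qquad Q[\varphi]:=\int_{\partial D_{1}}\frac{\partial v_{0}}{\partial\nu}\Big|_{+}=-\int_{\Omega}\nabla v_{1}\cdot\nabla v_{0},
\end{equation*}
so that $\nabla u=C_{1}\nabla v_{1}+\nabla v_{0}$. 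The theorem is then equivalent to (a) comparing $\nabla v_{1}$ with $\nabla\bar{u}$ and $\nabla v_{0}$ with $\nabla\bar{u}_{0}$ pointwise in $\Omega_{R}$; (b) computing the asymptotics of $a_{11}$; and (c) computing the asymptotics of $Q[\varphi]$.

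\textbf{Comparison step.} Set $w_{j}:=v_{j}-\bar{u}_{j}$, with $\bar{u}_{1}=\bar{u}$. Then $-\Delta w_{j}=\Delta\bar{u}_{j}$ in $\Omega$ with zero boundary data. Hypotheses ({\bf{\em H1}})--({\bf{\em H3}}) together with (\ref{con009})--(\ref{con016}) yield $|\Delta\bar{u}|\leq C\delta(x')^{-1}$ and $|\Delta\bar{u}_{0}|\leq C\|\varphi\|_{C^{2}(\partial D)}(|x'|^{k-2}+|x'|^{k}\delta(x')^{-1})$ throughout $\Omega_{R}$, with both $w_{j}$ under control outside $\Omega_{R}$ by classical estimates. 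After a global $L^{2}$-bound on $\nabla w_{j}$, the pointwise estimate $|\nabla w_{j}(x)|\leq C\delta(x')^{1-2/m}\|\varphi\|_{C^{2}(\partial D)}$ (with the $\varphi$-factor absent when $j=1$) follows from a Campanato/Morrey-type iteration on the rescaled cylinders $\Omega_{\delta(z')}(z')$ via the change of variables $y=(x'-z')/\delta(z'),\,y_{n}=(x_{n}-h(z'))/\delta(z')$, which maps each narrow region onto a unit-size domain where standard interior estimates apply. Combined with $|\nabla\bar{u}|\sim\delta^{-1}$, this supplies the additive tail $O(\mathbf{1})\delta^{1-2/m}\|\varphi\|_{C^{2}(\partial D)}$ coming from $\nabla w_{0}$, and contributes (after multiplication by $C_{1}$) to the relative error $O(r_{\varepsilon})$ in each regime.

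\textbf{Evaluation step.} Expanding $|\nabla v_{1}|^{2}=|\nabla\bar{u}|^{2}+2\nabla\bar{u}\cdot\nabla w_{1}+|\nabla w_{1}|^{2}$ and using the comparison bounds gives $a_{11}=\int_{\Omega}|\nabla\bar{u}|^{2}+O(1)$. On $\Omega_{R}$, (\ref{con009}) and ({\bf{\em H1}}) reduce the principal part to $(n-1)\omega_{n-1}\int_{0}^{R}r^{n-2}(\varepsilon+\lambda r^{m})^{-1}\,dr$, and the rescaling $r=(\varepsilon/\lambda)^{1/m}t$ together with the Beta identity $\int_{0}^{\infty}t^{n-2}/(1+t^{m})\,dt=\Gamma^{n}_{m}/m$ (truncated appropriately when $m\leq n-1$) produces $a_{11}=(n-1)\omega_{n-1}\Gamma^{n}_{m}\rho_{0}(n,m;\varepsilon)/(m\lambda^{(n-1)/m})+O(1)$ for $m\geq n-1$, and convergence $a_{11}\to a_{11}^{\ast}$ for $m<n-1$ (the limit identified from (\ref{con022}) by domain convergence $\Omega\to\Omega^{\ast}$). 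A parallel argument for $Q[\varphi]=-\int_{\Omega}\nabla v_{1}\cdot\nabla v_{0}$, inserting $\bar{u}_{0}|_{\Omega_{R}}=\varphi(x',h(x'))(1-\bar{u})$ and the $k$-growth ({\bf{\em S1}}), gives at leading order $\eta\int_{B_{R}'}|x'|^{k}\delta(x')^{-1}\,dx'+O(1)$, and the same rescaling yields either the $\rho_{k}(n,m;\varepsilon)$-divergence with prefactor $\eta(n-1)\omega_{n-1}\Gamma^{n+k}_{m}/(m\lambda^{(n+k-1)/m})$ when $m\geq n+k-1$, or convergence to $Q^{\ast}[\varphi]$ when $m<n+k-1$ (identified from (\ref{linear001})--(\ref{con003})). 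Inserting these into $C_{1}=Q[\varphi]/a_{11}$ and collapsing the $\lambda$-powers via $\lambda^{(n-1)/m}/\lambda^{(n+k-1)/m}=\lambda^{-k/m}$ produces the three leading coefficients announced in (i)--(iii).

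\textbf{Main obstacle.} The most delicate point is the pointwise bound on $\nabla w_{j}$ under only $C^{2,\alpha}$ regularity of $\partial D_{1}$ and $\partial D$, weakening the $C^{3,1}$ hypothesis of \cite{LLY2019}. In the $C^{3,1}$ setting one can differentiate $h_{1},h$ twice with a Lipschitz remainder and apply Schauder estimates directly to $\nabla w_{j}$; here only Hölder remainders of order $|x'|^{m-2+\alpha}$ are available in the principal part of the inhomogeneity $\Delta\bar{u}_{j}$, so the bound must be produced by an iteration on $\nabla w_{j}$ that carefully bookkeeps the $\delta$-powers of the forcing through each rescaled layer and absorbs the Hölder remainder at the correct scale. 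Once this estimate is in place, the three cases follow by combining the asymptotics of $a_{11}$ and $Q[\varphi]$ with the comparison step and verifying, regime by regime in (\ref{CCC}), that $C_{1}(\nabla v_{1}-\nabla\bar{u})$ is absorbed into $C_{1}\,O(r_{\varepsilon})\,\nabla\bar{u}$ and that $\nabla v_{0}-\nabla\bar{u}_{0}$ supplies exactly the stated $O(\mathbf{1})\delta^{1-2/m}\|\varphi\|_{C^{2}(\partial D)}$ tail.
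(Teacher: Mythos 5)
Your overall architecture matches the paper's: decompose $u=C_{1}v_{1}+v_{0}$ with $C_{1}=Q[\varphi]/a_{11}$, replace $v_{1},v_{0}$ by the explicit profiles $\bar{u},\bar{u}_{0}$ via a pointwise gradient comparison, and then expand $a_{11}$ and $Q[\varphi]$. That is indeed what the paper does. But there are two problems, one minor and one serious.

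\textbf{Minor.} Your stated source bound $|\Delta\bar{u}|\leq C\delta(x')^{-1}$ is only the generic $C^{2}$ bound and is \emph{too weak} to produce the refined remainder $O(\mathbf{1})\delta^{1-2/m}$. The paper's Appendix (see the estimate for $\Delta\bar v$) crucially exploits ({\bf{\em H2}}), namely $|\nabla_{x'}^{2}h_{1}|,|\nabla_{x'}^{2}h|\lesssim|x'|^{m-2}$, to push the bound down to $|\Delta\bar{u}|\lesssim(\varepsilon+|x'|^{m})^{-2/m}\sim\delta^{-2/m}$. Without this sharper bound your energy iteration on the narrow cylinders only yields $|\nabla(v_{1}-\bar{u})|\lesssim 1$ (the pre-improvement estimate of \cite{LX2017}), not $\lesssim\delta^{1-2/m}$, so the tail in the statement of Theorem \ref{thm001} is not recovered.

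\textbf{Serious.} The evaluation step in the regimes $m<n+k-1$ (for $Q[\varphi]$) and $m<n-1$ (for $a_{11}$) is not actually an argument. You write that $Q[\varphi]$ ``converges to $Q^{\ast}[\varphi]$'' and $a_{11}\to a_{11}^{\ast}$ ``by domain convergence,'' but the theorem asserts \emph{quantitative} rates $r_{\varepsilon}$ of the form $\varepsilon^{\frac{n+k-1-m}{(n+k-1)(m+1)}}$, $\varepsilon^{1/6}$, etc. Nothing you describe delivers those exponents, and in fact they do not come from the cylinder-rescaling machinery at all. This is where the real work of the paper lies: one compares $v_{1}$ with the touching-case solution $v_{1}^{\ast}$ by (a) bounding $|v_{1}-v_{1}^{\ast}|$ on the boundary of $D\setminus(\overline{D_{1}\cup D_{1}^{\ast}\cup\mathcal{C}_{\varepsilon^{\gamma}}})$ via Theorem \ref{thm002} and the explicit profiles $\bar{u},\bar{u}^{\ast}$ (giving competing contributions $\varepsilon^{1-m\gamma}$ and $\varepsilon^{2(m-1)\gamma}$ balanced at $\gamma=\frac{1}{m+1}$), (b) applying the maximum principle and interior/boundary gradient estimates to transfer this into $|\nabla(v_{1}-v_{1}^{\ast})|$ away from a shrinking cylinder of radius $\varepsilon^{1/(m+1)-\tilde{\gamma}}$, and then (c) splitting $\int_{\partial D}\partial_{\nu}(v_{1}-v_{1}^{\ast})\varphi$ into an ``outer'' piece controlled by (b) and an ``inner'' piece controlled directly by the sharp pointwise bounds $|\nabla_{x'}(\bar{u}-\bar{u}^{\ast})|\lesssim|x'|^{-1}$ and $|\partial_{n}(\bar{u}-\bar{u}^{\ast})|\lesssim\varepsilon|x'|^{-m}(\varepsilon+|x'|^{m})^{-1}$, optimizing $\tilde{\gamma}$. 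The same scheme, with a fixed exponent $\bar{\gamma}=\frac{1}{6m}$, underlies the $a_{11}$ expansion and accounts for the $\varepsilon^{1/6}$ in $r_{\varepsilon}$ when $m<n-1$. Your proposal does not even mention a comparison of $v_{1}$ with $v_{1}^{\ast}$, so as written it cannot reach parts (ii) and (iii) or the case-by-case values of $r_{\varepsilon}$ in (\ref{CCC}).

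In short: the decomposition and the narrow-region gradient estimate are the same route as the paper (modulo the too-weak $\Delta\bar{u}$ bound), but the quantitative stability of $Q$ and $a_{11}$ against the touching configuration is missing entirely, and that is the part of the proof that actually produces the error exponents the theorem claims.
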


\begin{remark}
There is a great difference between interior asymptotics and boundary asymptotics. Specifically, the blow-up factor $Q_{\varepsilon}[\varphi]$ defined in \cite{LLY2019} is bounded for any boundary data $\varphi$, while $Q[\varphi]$ here can increase the singularities of the field by $\varepsilon^{\frac{n+k-1}{m}-1}$ if $m>n+k-1$ or $|\ln\varepsilon|$ if $m=n+k-1$ for the boundary data $\varphi$ with $k$-order growth. In addition, when $m>2$, the remainder of order $O(\varepsilon^{1-2/m})$ in the shortest line segment between the conductors and the matrix boundary provides a more precise characterization on the asymptotic behavior of the concentration than that of $m=2$. Finally, the concisely main terms $\nabla\bar{u}$ and $\nabla\bar{u}_{0}$ together with their coefficients can completely describe the singular effect of the geometry, which will greatly reduce the complexity of numerical computation for $\nabla u$.
\end{remark}

\begin{remark}
The asymptotics of $\nabla u$ in Theorem \ref{thm001} indicate that

$(1)$ if $m\leq n+k-1$, then its maximum achieves only at $\{x'=0'\}\cap\Omega$;

$(2)$ if $m>n+k-1$, then the maximum achieves at both $\{x'=0'\}\cap\Omega$ and $\{|x'|=\varepsilon^{\frac{1}{m}}\}\cap\Omega$.
\end{remark}

\begin{remark}
In order to further reveal the effect of principal curvatures of the geometry, we take $n=3$ relevant to physical dimension for example. Consider
\begin{align*}
\varphi=\eta_{1}|x_{1}|^{k}+\eta_{2}|x_{2}|^{k},\quad x\in\{\lambda_{1}|x_{1}|^{m}+\lambda_{2}|x_{2}|^{m}<R,\;x_{3}=h(x')\},
\end{align*}
and
\begin{align}\label{Geometry}
h_{1}(x')-h(x')=\lambda_{1}|x_{1}|^{m}+\lambda_{2}|x_{2}|^{m},\quad x'\in\{\lambda_{1}|x_{1}|^{m}+\lambda_{2}|x_{2}|^{m}<R\},
\end{align}
where $\lambda_{i},\eta_{i}$, $i=1,2$, are four positive constant independent of $\varepsilon$. Then by the same method as in Theorem \ref{thm001}, we find that the coefficient of the main term $\nabla\bar{u}$ has an explicit dependence on $\lambda_{i}$ and $\eta_{i}$ in the form of that $\eta_{1}\lambda_{1}^{-\frac{k}{m}}+\eta_{2}\lambda_{2}^{-\frac{k}{m}}$ for $m\geq k+2$ and $\sqrt[m]{\lambda_{1}\lambda_{2}}$ for $m<k+2$.

\end{remark}

%The boundary data $\varphi$ in Theorem \ref{thm001} is even with respect to each $x_{j}$, $j=1,\cdots,n-1$, on $\Gamma^{-}_{R}$. We now consider another class of boundary data $\varphi$ which is odd with respect to some $x_{i_{0}}$, $i_{0}\in\{1,\cdots,n-1\}$ on $\Gamma^{-}_{R}$.
\begin{theorem}\label{coro002}
Assume that $D_{1}\subset D\subseteq\mathbb{R}^{n}\,(n\geq2)$ are defined as above, conditions ({\bf{\em H1}})--({\bf{\em H3}}) and ({\bf{\em S2}}) hold, $Q^{\ast}[\varphi]\neq0$. Let $u\in H^{1}(D;\mathbb{R}^{n})\cap C^{1}(\overline{\Omega};\mathbb{R}^{n})$ be the solution of (\ref{con002}).  Then for a sufficiently small $\varepsilon>0$,

$(i)$ for $m\geq n-1$,
\begin{align*}
\nabla u=\frac{m\lambda^{\frac{n-1}{m}}Q^{\ast}[\varphi]}{(n-1)\omega_{n-1}\Gamma^{n}_{m}}\frac{1+O(\tilde{r}_{\varepsilon})}{\rho_{0}(n,m;\varepsilon)}\nabla\bar{u}+\nabla\bar{u}_{0}+O(\mathbf{1})\delta^{1-\frac{2}{m}}\|\varphi\|_{C^{2}(\partial D)};
\end{align*}

$(ii)$ for $m<n-1$,

\begin{align*}
\nabla u=\frac{Q^{\ast}[\varphi]}{a_{11}^{\ast}}(1+O(\tilde{r}_{\varepsilon}))\nabla\bar{u}+\nabla\bar{u}_{0}+O(\mathbf{1})\delta^{1-\frac{2}{m}}\|\varphi\|_{C^{2}(\partial D)},
\end{align*}
where $\bar{u}$ and $\bar{u}_{0}$ are defined by (\ref{con009}) and (\ref{con016}), respectively, $\delta$ is defined by (\ref{ZZW666}), $a_{11}^{\ast}$ is defined by (\ref{zz1}), and
\begin{align*}
\tilde{r}_{\varepsilon}=&
\begin{cases}
\varepsilon^{\frac{m+n-2}{(m+1)(2m+n-2)}},&m>n-1,\\
|\ln\varepsilon|^{-1},&m=n-1,\\
\max\{\varepsilon^{\frac{m+n-2}{(m+1)(2m+n-2)}},\varepsilon^{\frac{1}{6}}\}.&m<n-1.
\end{cases}
\end{align*}
\end{theorem}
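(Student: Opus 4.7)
The plan is to run the decomposition-plus-energy strategy behind Theorem~\ref{thm001}, with (\textbf{\emph{S2}}) replacing (\textbf{\emph{S1}}) at exactly one place: the large-$m$ divergence that forces the alternative $\rho_{k;0}$-coefficient in Theorem~\ref{thm001}(i) disappears, so a single formula is available for all $m\geq n-1$. Concretely, write $u=C_{1}v_{1}+v_{0}$, where $v_{1}$ is harmonic in $\Omega$ with $v_{1}=1$ on $\partial D_{1}$ and $v_{1}=0$ on $\partial D$, and $v_{0}$ is harmonic in $\Omega$ with $v_{0}=0$ on $\partial D_{1}$ and $v_{0}=\varphi$ on $\partial D$. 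The flux constraint in (\ref{con002}) together with $\int_{\partial D_{1}}\partial_{\nu}v_{1}|_{+}=-a_{11}$ gives
\[
C_{1}=\frac{b_{0}}{a_{11}},\qquad a_{11}:=\int_{\Omega}|\nabla v_{1}|^{2},\qquad b_{0}:=\int_{\partial D_{1}}\partial_{\nu}v_{0}\big|_{+}.
\]
Pointwise gradient estimates of the form $|\nabla v_{1}-\nabla\bar{u}|+\|\varphi\|_{C^{2}(\partial D)}^{-1}|\nabla v_{0}-\nabla\bar{u}_{0}|\leq C\delta^{1-2/m}$, which are established prior to the theorem by an iterated $W^{1,2}$-to-$L^{\infty}$ argument adapted to the $\varepsilon$-thin geometry, reduce the proof to sharp asymptotics for the scalars $a_{11}$ and $b_{0}$.

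For the denominator, replacing $v_{1}$ by $\bar{u}$ in the thin region and using $|\nabla\bar{u}|^{2}\approx\delta(x')^{-2}$ together with $\int_{h(x')}^{\varepsilon+h_{1}(x')}dx_{n}=\delta(x')$ reduce $a_{11}$ to $\int_{B'_{R}}\delta(x')^{-1}\,dx'$ up to a bounded correction. The Laplace-type expansion of this integral under (\textbf{\emph{H1}})--(\textbf{\emph{H2}}) produces
\[
a_{11}=\frac{(n-1)\omega_{n-1}\,\Gamma^{n}_{m}}{m\,\lambda^{(n-1)/m}}\,\rho_{0}(n,m;\varepsilon)\bigl(1+O(\tilde{r}_{\varepsilon})\bigr)
\]
when $m\geq n-1$; for $m<n-1$ the integral $\int_{\Omega}|\nabla v_{1}|^{2}$ is already convergent as $\varepsilon\to 0$, so $a_{11}=a_{11}^{\ast}(1+O(\tilde{r}_{\varepsilon}))$ follows by comparing $v_{1}$ and $v_{1}^{\ast}$ on $\Omega\setminus\Omega_{\varepsilon^{\beta}}$ and bounding the tip $\Omega_{\varepsilon^{\beta}}$ by cut-off.

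The decisive new step is the asymptotic $b_{0}=Q^{\ast}[\varphi]\bigl(1+O(\tilde{r}_{\varepsilon})\bigr)$. Substituting $v_{0}\approx\bar{u}_{0}=\varphi(x',h(x'))(1-\bar{u})$ in $\Omega_{R}$ would give a leading contribution comparable to $\int_{B'_{R}}\varphi(x',h(x'))\,\delta(x')^{-1}\,dx'$, which under (\textbf{\emph{S1}}) scales as $\int r^{k+n-2}/(\varepsilon+\lambda r^{m})\,dr$ and diverges once $m\geq n+k-1$; under (\textbf{\emph{S2}}), however, the isotropic leading part $\varepsilon+\lambda|x'|^{m}$ of $\delta(x')$ is even in $x_{i_{0}}$ while $\varphi(\cdot,h(\cdot))$ is odd in $x_{i_{0}}$, so this principal contribution vanishes by parity. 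The surviving pieces---perturbations of $\delta$ coming from higher-order terms in (\textbf{\emph{H1}}), and bulk contributions away from the tip---are then controlled by comparing $v_{0}$ with the reference solution $v_{0}^{\ast}$ of (\ref{con003}) on $\Omega^{\ast}$ via a cut-off at radius $\varepsilon^{\beta}$, with $\beta$ chosen to balance the $\delta^{1-2/m}$ interior bound against the $H^{1}$-convergence rate of $v_{0}\to v_{0}^{\ast}$ on $\Omega\setminus\Omega_{\varepsilon^{\beta}}$. The optimal exponent is what produces $\tilde{r}_{\varepsilon}=\varepsilon^{(m+n-2)/((m+1)(2m+n-2))}$, together with the $\varepsilon^{1/6}$ floor in the regime $m<n-1$.

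Combining the three ingredients gives $C_{1}=(Q^{\ast}[\varphi]/a_{11})(1+O(\tilde{r}_{\varepsilon}))$, and substituting into $\nabla u=C_{1}\nabla v_{1}+\nabla v_{0}=C_{1}\nabla\bar{u}+\nabla\bar{u}_{0}+O(\mathbf{1})\delta^{1-2/m}\|\varphi\|_{C^{2}(\partial D)}$ yields both (i) and (ii). I expect the main obstacle to lie in the third paragraph: turning the qualitative parity cancellation into a quantitative estimate on the $\varepsilon$-domain, where $\delta$ is only $C^{2,\alpha}$-close to its symmetric leading part, and matching the resulting thin-region error against the bulk $H^{1}$-error from $v_{0}-v_{0}^{\ast}$ with the right cut-off scale to extract the precise rate $\tilde{r}_{\varepsilon}$.
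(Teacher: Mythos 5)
Your overall strategy matches the paper: decompose $u=C_{1}v_{1}+v_{0}$, reduce to the pointwise comparison $\nabla v_{1}\approx\nabla\bar{u}$, $\nabla v_{0}\approx\nabla\bar{u}_{0}$ with $O(\delta^{1-2/m})$ error, expand $a_{11}$ by a Laplace-type computation, expand $Q[\varphi]$ relative to $Q^{\ast}[\varphi]$, and use parity under (\textbf{\emph{S2}}) to kill the singular contribution. All of this is correct, and you correctly identify the balance of a thin-region error against a bulk cut-off error as the source of the exponent $\frac{m+n-2}{(m+1)(2m+n-2)}$ and the $\varepsilon^{1/6}$ floor inherited from the $a_{11}$ expansion when $m<n-1$.

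Where you diverge from the paper's route is in how $Q[\varphi]-Q^{\ast}[\varphi]$ is actually estimated. You work with $b_{0}=\int_{\partial D_{1}}\partial_{\nu}v_{0}$ and propose to compare $v_{0}$ with $v_{0}^{\ast}$ directly. The paper instead applies Green's identity once to write both $Q[\varphi]=\int_{\partial D}\partial_{\nu}v_{1}\,\varphi$ and $Q^{\ast}[\varphi]=\int_{\partial D}\partial_{\nu}v_{1}^{\ast}\,\varphi$, so that $Q-Q^{\ast}=\int_{\partial D}\partial_{\nu}(v_{1}-v_{1}^{\ast})\varphi$ is a comparison on the \emph{fixed} surface $\partial D$, and furthermore $v_{1}-v_{1}^{\ast}$ vanishes on $\partial D$, which makes the inner/outer split (in $\mathcal{A}^{in}$, $\mathcal{A}^{out}$) and the maximum-principle estimate at the cut-off scale $\varepsilon^{1/(m+1)}$ clean. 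Your route faces a genuine technical complication that you should not gloss over: the natural surfaces $\partial D_{1}$ and $\partial D_{1}^{\ast}$ are \emph{different} (they differ by the vertical $\varepsilon$-translate), so $\int_{\partial D_{1}}\partial_{\nu}v_{0}-\int_{\partial D_{1}^{\ast}}\partial_{\nu}v_{0}^{\ast}$ is not a difference of integrals over one surface, and the near-tip contribution involves both a domain perturbation and the solution perturbation simultaneously. This can be handled, but the cleanest way to do it essentially reproduces the paper's Green's-identity reduction to $\partial D$, where the common boundary data $\varphi$ sits and the surfaces coincide. Moreover, the paper applies the parity cancellation not to $\int_{B'_R}\varphi(x',h(x'))\,\delta(x')^{-1}dx'$ as you describe but to the specific normal-derivative piece $\mathcal{A}^{2}_{u}=\int_{\partial D\cap\mathcal{C}}\partial_{n}(\bar{u}-\bar{u}^{\ast})\nu_{n}\varphi$, which is zero exactly because $\partial_{n}(\bar{u}-\bar{u}^{\ast})$ depends on $x'$ only through $\delta$ and $h_{1}-h$; establishing the analogue of this exact cancellation in your $b_{0}$-picture is less automatic. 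In short: the high-level plan and the key parity idea are right, the rate you announce is right, but the comparison on the $\varepsilon$-dependent surface $\partial D_{1}$ is a weaker starting point than the paper's comparison on $\partial D$, and a careful write-up of your version would likely converge back to the paper's transformation.
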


\begin{remark}
The asymptotics of $\nabla u$ in Theorem \ref{coro002} imply that

$(1)$ if $m<n$, then its maximum attains only at $\{x'=0'\}\cap\Omega$;

$(2)$ if $m=n$, then the maximum attains at $\{x'=0'\}\cap\Omega$ and $\{|x'|=\varepsilon^{\frac{1}{m}}\}\cap\Omega$ simultaneously;

$(3)$ if $m>n$, then the maximum attains at $\{|x'|=\varepsilon^{\frac{1}{m}}\}\cap\Omega$.
\end{remark}

\begin{remark}
If (\ref{Geometry}) holds in Theorem \ref{coro002}, we can obtain that the coefficient of the main term $\nabla\bar{u}$ has an explicit dependence of $\sqrt[m]{\lambda_{1}\lambda_{2}}$.
\end{remark}

The organization of this paper is as follows. In section 2, we carry out a linear decomposition of the solution $u$ to problem (\ref{con002}) as $v_{0}$ and $v_{1}$, defined by (\ref{con005}) and (\ref{con006}) below, and we prove the correspondingly main terms $\bar{u}_{0}$ and $\bar{u}$ constructed by (\ref{con009}) and (\ref{con016}), respectively, in Lemma \ref{lem001} and Theorem \ref{thm002}. Based on the results obtained in section 2, we give the proofs of Theorem \ref{thm001} and Theorem \ref{coro002} consisting of the asymptotics of blow-up factor $Q[\varphi]$ and $a_{11}$ in section 3.

\section{Preliminary}

\subsection{Solution split}
As in \cite{LX2017}, we decompose the solution $u$ of (\ref{con002}) as follows
\begin{align}\label{con0033}
u(x)=C_{1}v_{1}(x)+v_{0}(x),\quad\;\,\mathrm{in}\;D\setminus\overline{D}_{1},
\end{align}
where $v_{i}$, $i=0,1$, verify
\begin{align}\label{con005}
\begin{cases}
\Delta v_{0}=0,\quad\quad\;\,&\mathrm{in}\;\Omega,\\
v_{0}=0,\quad\quad\;\,&\mathrm{on}\;\partial D_{1},\\
v_{0}=\varphi(x),\quad\;\,&\mathrm{on}\;\partial D,
\end{cases}
\end{align}
and
\begin{align}\label{con006}
\begin{cases}
\Delta v_{1}=0,\quad\quad\;\,&\mathrm{in}\;\Omega,\\
v_{1}=1,\quad\quad\;\,&\mathrm{on}\;\partial D_{1},\\
v_{1}=0,\quad\;\,&\mathrm{on}\;\partial D,
\end{cases}
\end{align}
respectively. Similarly as (\ref{linear001}) and (\ref{con003}), we define a linear functional of $\varphi$ as follows
\begin{align}\label{linear002}
Q[\varphi]=\int_{\partial D_{1}}\frac{\partial v_{0}}{\partial\nu},
\end{align}
where $v_{0}$ is defined by (\ref{con005}). Denote
\begin{align*}
a_{11}:=\int_{\Omega}|\nabla v_{1}|^{2}dx.
\end{align*}
Then, it follows from the third line of (\ref{con002}) and the decomposition (\ref{con0033}) that
\begin{align*}
C_{1}\int_{\partial D_{1}}\frac{\partial v_{1}}{\partial\nu}+\int_{\partial D_{1}}\frac{\partial v_{0}}{\partial\nu}=0.
\end{align*}
Recalling the definition of $v_{1}$ and making use of integration by parts, we have
\begin{align}\label{con007}
\nabla u=\frac{Q[\varphi]}{a_{11}}\nabla v_{1}+\nabla v_{0}.
\end{align}

\subsection{A general boundary value problem}

To obtain the asymptotic expansion for $\nabla u$, we first consider the following general boundary value problem:
\begin{equation}\label{con008}
\begin{cases}
\Delta v=0,\quad\;\,&\mathrm{in}\;\,\Omega,\\
v=\psi,&\mathrm{on}\;\,\partial D_{1},\\
v=0,&\mathrm{on}\;\,\partial D,
\end{cases}
\end{equation}
where $\psi\in C^{2}(\partial D_{1})$ is a given scalar function. Note that if $\psi=1$ on $\partial D_{1}$, then $v_{1}=v$. Extend $\psi\in C^{2}(\partial D_{1})$ to $\psi\in C^{2}(\overline{\Omega})$ such that $\|\psi\|_{C^{2}(\overline{\Omega\setminus\Omega_{R}})}\leq C\|\psi\|_{C^{2}(\partial D_{1})}$. Construct a cutoff function $\rho\in C^{2}(\overline{\Omega})$ satisfying $0\leq\rho\leq1$, $|\nabla\rho|\leq C$ on $\overline{\Omega}$, and
\begin{align}\label{con011}
\rho=1\;\,\mathrm{on}\;\,\Omega_{\frac{3}{2}R},\quad\rho=0\;\,\mathrm{on}\;\,\overline{\Omega}\setminus\Omega_{2R}.
\end{align}
For $x\in\Omega$, we define
\begin{align*}
\bar{v}(x)=[\rho(x)\psi(x',\varepsilon+h_{1}(x'))+(1-\rho(x))\psi(x)]\bar{u}(x),
\end{align*}
where $\bar{u}$ is defined by (\ref{con009}). Specially,
\begin{align*}
\bar{v}(x)=\psi(x',\varepsilon+h_{1}(x'))\bar{u}(x),\quad\;\,\mathrm{in}\;\Omega_{R}.
\end{align*}
Due to (\ref{con009}), we have
\begin{align}\label{KK6}
\|\bar{v}\|_{C^{2}(\Omega\setminus\Omega_{R})}\leq C\|\psi\|_{C^{2}(\partial D_{1})}.
\end{align}

Similarly as in \cite{LX2017}, we can obtain an asymptotic expansion of the gradient for problem (\ref{con006}).
\begin{theorem}\label{thm002}
Assume as above. Let $v\in H^{1}(\Omega)$ be a weak solution of (\ref{con008}). Then, for a sufficiently small $\varepsilon>0$,
\begin{align}\label{con013}
|\nabla(v-\bar{v})(x)|\leq C\delta^{1-\frac{2}{m}}(|\psi(x',\varepsilon+h_{1}(x'))|+\delta^{\frac{1}{m}}\|\psi\|_{C^{2}(\partial D_{1})}),\quad\mathrm{in}\;\,\Omega_{R}.
\end{align}
Consequently, (\ref{con013}), together with choosing $\psi=1$ on $\partial D_{1}$, yields that
\begin{align}\label{con015}
\nabla v_{1}=\nabla\bar{u}+O(\mathbf{1})\delta^{1-\frac{2}{m}},\quad\;\,\mathrm{in}\;\Omega_{R},
\end{align}
and
\begin{align*}
\|\nabla v\|_{L^{\infty}(\Omega\setminus\Omega_{R})}\leq C\|\psi\|_{C^{2}(\partial D_{1})}.
\end{align*}
where $v_{1}\in H^{1}(\Omega)$ is a weak solution of (\ref{con006})

\end{theorem}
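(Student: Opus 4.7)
The plan is to study the difference $w := v - \bar{v}$, which vanishes on both $\partial D$ and $\partial D_1$ and satisfies $-\Delta w = \Delta\bar{v}$ in $\Omega$ in the weak sense. Following the Bao--Li--Li / Li--Xu paradigm, I first compute $\Delta\bar{v}$ inside $\Omega_R$ using the explicit form $\bar{v} = \tilde\psi(x')\,\bar{u}(x)$ with $\tilde\psi(x') := \psi(x',\varepsilon+h_1(x'))$ and $\bar{u}$ linear in $x_n$. Since $\partial_{nn}\bar{u} = 0$, the Laplacian reduces to tangential derivatives of $\tilde\psi$ and $\bar{u}$; combined with (H2)--(H3) and $\delta(x')\sim\varepsilon+\lambda|x'|^m$, this produces schematically
\begin{align*}
|\Delta\bar{v}(x)|\leq C\,\delta(x')^{-1}\bigl(|\tilde\psi(x')|+\delta(x')^{1/m}\|\psi\|_{C^2(\partial D_1)}\bigr),\qquad x\in\Omega_R,
\end{align*}
together with a crude $L^\infty$ bound on $\Omega\setminus\Omega_R$ coming from (\ref{KK6}).

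Next I perform an iteration-of-energy argument on the narrow cylinders $\Omega_t(z')$. For $z'\in B'_R$ and $0<t<s\leq\delta(z')^{1/m}$, I pick a cutoff $\eta$ supported in $\Omega_s(z')$ with $\eta\equiv 1$ on $\Omega_t(z')$ and $|\nabla\eta|\leq C/(s-t)$, then test the weak equation against $\eta^2 w$ to get
\begin{align*}
\int_{\Omega_t(z')}|\nabla w|^2\leq \frac{C}{(s-t)^2}\int_{\Omega_s(z')}w^2+C\int_{\Omega_s(z')}|\Delta\bar{v}|\,\eta^2|w|.
\end{align*}
Since $w$ vanishes on the top and bottom portions of $\partial\Omega_s(z')$, the vertical Poincar\'e inequality gives $\|w\|_{L^2(\Omega_s(z'))}\leq C\delta(z')\|\partial_n w\|_{L^2(\Omega_s(z'))}$. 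Substituting and using the pointwise bound on $\Delta\bar v$ yields a recursion of the form $F(t)\leq c(\delta(z')/(s-t))^2 F(s)+G(z',\delta,\psi)$ for $F(t):=\int_{\Omega_t(z')}|\nabla w|^2$. Iterating along radii $s_k$ with $s_k-s_{k+1}=2^{-k-1}\delta(z')^{1/m}$ (the natural horizontal length scale at which $\delta$ doubles) collapses the recursion and bounds $F$ on $\Omega_{\delta(z')^{1/m}/2}(z')$ purely by the inhomogeneity $G$, whose size is $\delta(z')^{n/m+1-2}(|\tilde\psi(z')|^2+\delta(z')^{2/m}\|\psi\|_{C^2}^2)$ up to universal constants.

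Finally I rescale $\Omega_{\delta(z')^{1/m}/2}(z')$ to a domain of unit size using the anisotropic change of variables $y'=(x'-z')/\delta(z')^{1/m}$, $y_n=(x_n-h(z'))/\delta(z')$, under which $\partial D_1$ and $\partial D$ are flattened into uniformly $C^{2,\alpha}$ hypersurfaces. Standard boundary $C^{1,\alpha}$ Schauder estimates for the rescaled harmonic problem with $L^\infty$ right-hand side then convert the localized $L^2$ energy bound into a pointwise $L^\infty$ bound on $|\nabla w|$; undoing the scaling restores the factor $\delta(z')^{1-2/m}$ and yields (\ref{con013}). Specialising to $\psi\equiv1$ gives (\ref{con015}), and the $L^\infty$ bound on $\Omega\setminus\Omega_R$ is immediate from classical elliptic regularity on a fixed domain independent of $\varepsilon$, together with (\ref{KK6}). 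The hard part is keeping the right-hand side of the iteration sharp at size $|\tilde\psi(z')|$ rather than the cruder $\|\psi\|_{C^2(\partial D_1)}$, since it is precisely this refinement that transmits the boundary-data-dependent weight $\eta|z'|^k$ into the blow-up factor used in Theorem \ref{thm001}; the mechanism is to Taylor-expand $\tilde\psi$ about $z'$ inside each narrow cylinder and to absorb the remainder into the $\delta^{1/m}\|\psi\|_{C^2}$ term throughout the iteration.
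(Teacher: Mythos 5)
Your overall architecture (energy identity for $w=v-\bar v$, Caccioppoli-type iteration on narrow cylinders, then rescaled interior estimates) matches the paper's Appendix proof, but two steps contain genuine errors that prevent the argument from producing the stated estimate for $m>2$.

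First, your pointwise bound on $\Delta\bar v$ is not sharp. With $\bar v=\tilde\psi(x')\,\bar u$ and $\partial_{nn}\bar u=0$, the dominant contribution is $\tilde\psi\,\Delta_{x'}\bar u$, and (H2) gives $|\nabla_{x'}\bar u|\le C\delta^{-1/m}$ and $|\Delta_{x'}\bar u|\le C\delta^{-2/m}$ (because $|\nabla_{x'}^i h|,\,|\nabla_{x'}^i h_1|\lesssim|x'|^{m-i}\lesssim\delta^{(m-i)/m}$). The paper records this as (\ref{ADE003}): $|\Delta\bar v|\le\delta^{-2/m}|\tilde\psi|+\delta^{-1/m}\|\nabla\psi\|_{L^\infty}+\|\nabla^2\psi\|_{L^\infty}$. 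Your bound $|\Delta\bar v|\lesssim\delta^{-1}\bigl(|\tilde\psi|+\delta^{1/m}\|\psi\|_{C^2}\bigr)$ coincides with this only when $m=2$ and is strictly worse when $m>2$ (since $\delta^{-1}\gg\delta^{-2/m}$ as $\delta\to0$). If you feed $\delta^{-1}$ into the iteration and rescaling you recover only the $O(1)$ remainder of Li--Xu, not the improved $O(\delta^{1-2/m})$ remainder that is precisely the paper's contribution. Note also that the claimed localized energy size $\delta^{n/m+1-2}$ does not match the scale-correct exponent $\delta^{n+2-4/m}$ on $\Omega_{\delta}(z')$ that the paper iterates down to.

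Second, the anisotropic change of variables $y'=(x'-z')/\delta^{1/m},\ y_n=(x_n-h(z'))/\delta$ turns $\Delta$ into the operator $\delta^{2-2/m}\Delta_{y'}+\partial_{y_n}^2$ (after multiplying by $\delta^2$), which degenerates as $\delta\to0$ for every $m\ge2$; you cannot apply ``standard boundary $C^{1,\alpha}$ Schauder estimates'' to a vanishing-ellipticity operator. The paper instead localizes the energy on the \emph{isotropic} cylinder $\Omega_{\delta}(z')$ of horizontal radius $\delta$ (not $\delta^{1/m}$), via uniform iteration steps of size $\sim\delta(z')$, uses the one-dimensional Poincaré bound $\|w\|_{L^2(\Omega_s(z'))}\le C\delta\|\partial_n w\|_{L^2(\Omega_s(z'))}$, and then rescales $\Omega_{\delta}(z')$ by the \emph{same} factor $\delta$ in all directions, under which $\Delta$ is preserved and the rescaled domain has uniform thickness; the $W^{2,p}$/Sobolev bootstrap then yields $\|\nabla w\|_{L^\infty(\Omega_{\delta/2}(z'))}\le\frac{C}{\delta}\bigl(\delta^{1-n/2}\|\nabla w\|_{L^2(\Omega_\delta(z'))}+\delta^2\|\Delta\bar v\|_{L^\infty(\Omega_\delta(z'))}\bigr)$. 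Replacing your two flawed ingredients with these two (sharp $\delta^{-2/m}$ Laplacian bound and isotropic $\delta$-scale rescaling) recovers the paper's argument.
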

Note that when $m>2$, the remainder of order $O(1)$ in \cite{LX2017} is improved to that of order $O(\varepsilon^{1-2/m})$ for $x\in\{x'=0'\}\cap\Omega_{R}$ here. For readers' convenience, the detailed proof of Theorem \ref{thm002} is left in the Appendix. Similarly, by applying Theorem \ref{thm002}, we can find that the leading term of $\nabla v_{0}$ is $\nabla\bar{u}_{0}$ in the following.
\begin{lemma}\label{lem001}
Assume as above. Let $v_{0}$ be the weak solution of (\ref{con005}). Then, for a sufficiently small $\varepsilon>0$,
\begin{align}\label{con018}
\nabla v_{0}=\nabla\bar{u}_{0}+O(\mathbf{1})\delta^{1-\frac{2}{m}}(|\varphi(x',h(x'))|+\delta^{\frac{1}{m}}\|\varphi\|_{C^{2}(\partial D)}),\quad\;\,\mathrm{in}\;\Omega_{R},
\end{align}
and
\begin{align}\label{con01818}
\|\nabla_{x'}v_{0}\|_{L^{\infty}(\Omega_{R})}\leq C\|\varphi\|_{C^{2}(\partial D)},\;\,\|\nabla v_{0}\|_{L^{\infty}(\Omega\setminus\Omega_{R})}\leq C\|\varphi\|_{C^{2}(\partial D)},
\end{align}
where $\bar{u}_{0}$ is defined by (\ref{con016}).
\end{lemma}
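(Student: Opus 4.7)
My plan is to mirror the argument of Theorem~\ref{thm002} with the roles of $\partial D_1$ and $\partial D$ interchanged. Set $w_0:=v_0-\bar u_0$. By the construction of $\bar u_0$ in (\ref{con016}), $w_0$ vanishes on all of $\partial\Omega$, and $-\Delta w_0=\Delta\bar u_0$ in $\Omega$. Hence (\ref{con018}) amounts to a pointwise estimate of $\nabla w_0$ in $\Omega_R$.

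First, I compute the forcing $\Delta\bar u_0$ in $\Omega_{2R}$. Writing $f(x'):=\varphi(x',h(x'))$ and using $\bar u_0=f(x')(1-\bar u(x))$,
\begin{align*}
\Delta\bar u_0=(1-\bar u)\Delta_{x'}f(x')-2\nabla_{x'}f(x')\cdot\nabla_{x'}\bar u-f(x')\Delta\bar u.
\end{align*}
The first piece is uniformly $O(\|\varphi\|_{C^2(\partial D)})$; the last two inherit the singular scalings of $\nabla\bar u$ and $\Delta\bar u$, which under (\textbf{\emph{H1}})--(\textbf{\emph{H3}}) are precisely the quantities estimated in the proof of Theorem~\ref{thm002} and which produce the prefactor $\delta^{1-2/m}$.

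Second, I rescale on cylinders $\Omega_t(z')$ with $z'\in B'_R$ and $t\in(\delta(z')^{1/m}/2,\delta(z')^{1/m})$. Testing $\Delta w_0=-\Delta\bar u_0$ against $w_0\eta^2$ for a standard cutoff $\eta$ yields the Caccioppoli inequality
\begin{align*}
\int_{\Omega_{t/2}(z')}|\nabla w_0|^2\le Ct^{-2}\int_{\Omega_t(z')}|w_0|^2+C\int_{\Omega_t(z')}|\Delta\bar u_0|\,|w_0|.
\end{align*}
After dilating $\Omega_{\delta^{1/m}}(z')$ to a domain of unit diameter with scaling factor $\delta(z')^{1/m}$, the rescaled problem has bounded coefficients and interior/boundary Schauder regularity upgrades this to
\begin{align*}
|\nabla w_0(z',x_n)|\le C\delta(z')^{-1/m}\|w_0\|_{L^\infty(\Omega_{\delta^{1/m}}(z'))}+C\delta(z')^{1-2/m}\bigl(|f(z')|+\delta(z')^{1/m}\|\varphi\|_{C^2(\partial D)}\bigr),
\end{align*}
the two-term right-hand side coming from the splitting $f(x')=f(z')+O(|x'-z'|\|\varphi\|_{C^2(\partial D)})$ inside the cross term $\nabla_{x'}f\cdot\nabla_{x'}\bar u$.

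Third, I control $\|w_0\|_{L^\infty(\Omega_{\delta^{1/m}}(z'))}$ by the same layer iteration that drives Theorem~\ref{thm002}: a global energy estimate bounds $\int_\Omega|\nabla w_0|^2$, and iterating the Caccioppoli inequality over a geometric sequence of radii transfers this to a pointwise bound of the form $\|w_0\|_{L^\infty}\le C\delta^{1-1/m}(|f(z')|+\delta^{1/m}\|\varphi\|_{C^2(\partial D)})$. Combining with the rescaled gradient estimate yields (\ref{con018}). Finally, (\ref{con01818}) follows quickly: in $\Omega_R$ the $x'$-components of $\nabla\bar u_0$ are $O(\|\varphi\|_{C^2(\partial D)})$ since only $\partial_{x_n}(1-\bar u)$ is singular, and away from $\Omega_R$ standard Schauder estimates apply to $v_0$. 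The main obstacle is the bookkeeping of two distinct scales, $|\varphi(x',h(x'))|$ versus $\delta^{1/m}\|\varphi\|_{C^2(\partial D)}$, through the iteration; the former arises by freezing $f$ at the base point $z'$ in the cross term of $\Delta\bar u_0$, the latter from its first-order variation, and keeping them separated is essential to reproducing the exact right-hand side of (\ref{con018}).
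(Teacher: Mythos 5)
Your overall strategy is exactly the paper's: Lemma~\ref{lem001} is Theorem~\ref{thm002} with the roles of $\partial D_1$ and $\partial D$ interchanged (the paper literally dismisses the proof with ``Similarly, by applying Theorem~\ref{thm002}\dots''), and your decomposition $w_0=v_0-\bar u_0$, the Caccioppoli/layer iteration on cylinders $\Omega_t(z')$, and the rescaled $W^{2,p}$/Schauder upgrade are the same ingredients that appear in the Appendix.

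There is, however, one genuine slip in your justification of the first inequality in (\ref{con01818}). You assert that the $x'$-components of $\nabla\bar u_0$ are bounded ``since only $\partial_{x_n}(1-\bar u)$ is singular.'' That is not so: from (\ref{con009}) and ({\bf{\em H2}}) one computes
\begin{align*}
|\nabla_{x'}\bar u(x)|\le \frac{C|x'|^{m-1}}{\varepsilon+h_1(x')-h(x')}\le \frac{C|x'|^{m-1}}{\varepsilon+\lambda|x'|^m},
\end{align*}
which is of size $\varepsilon^{-1/m}$ on $\{|x'|\sim\varepsilon^{1/m}\}$, hence unbounded as $\varepsilon\to0$. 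The tangential derivative of $\bar u$ is singular; what saves the estimate is the normalization $\varphi(0)=0$ imposed in the paper, which gives $|\varphi(x',h(x'))|\le C|x'|\,\|\varphi\|_{C^{1}(\partial D)}$ and therefore makes the product $\varphi(x',h(x'))\,\nabla_{x'}\bar u$ bounded because $|x'|\le \sqrt[m]{\varepsilon+|x'|^{m}}$ absorbs the singular factor (this is exactly what the prefactor $|\psi(x',\varepsilon+h_1(x'))|$ in (\ref{ADE001}) encodes once you swap in $\psi=\varphi(\cdot,h(\cdot))$). Without $\varphi(0)=0$ the claimed bound on $\nabla_{x'}v_0$ would actually fail, so the reason you give is wrong even though the conclusion is correct. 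As a secondary and more venial point, your third step says the Caccioppoli iteration directly yields an $L^\infty$ bound on $w_0$; what the iteration in the Appendix produces is the local $L^2$ gradient estimate (\ref{step2}), and the pointwise control then comes from the rescaled $W^{2,p}$ and bootstrap step, not from the iteration itself.
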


Therefore, recalling the decomposition (\ref{con007}) and in view of (\ref{con015}) and (\ref{con018}), for the purpose of deriving the asymptotic of $\nabla u$, it suffices to establish the following two aspects of expansions:

(i) Expansion of $Q[\varphi]$;

(ii) Expansion of $a_{11}$.

\section{Proofs of Theorem \ref{thm001} and Theorem \ref{coro002}}

\subsection{Expansion of $Q[\varphi]$}
Before proving Theorem \ref{thm001} and Theorem \ref{coro002}, we first give an expansion of $Q[\varphi]$ with respect to $\varepsilon$.
\begin{lemma}\label{lem002}
Assume as above. Then, for a sufficiently small $\varepsilon>0$,

$(a)$ if ({\bf{\em S1}}) holds for $m\geq n+k-1$ in Theorem \ref{thm001},
\begin{align*}
Q[\varphi]=&\frac{(n-1)\omega_{n-1}\eta\Gamma^{n+k}_{m}}{m\lambda^{\frac{n+k-1}{m}}}\rho_{k}(n,m;\varepsilon)
\begin{cases}
1+O(1)\varepsilon^{\frac{1}{m}},&m>n+k,\\
1+O(1)\varepsilon^{\frac{1}{m}}|\ln\varepsilon|,&m=n+k,\\
1+O(1)|\ln\varepsilon|^{-1},&m=n+k-1;
\end{cases}
\end{align*}

$(b)$ if ({\bf{\em S1}}) holds for $m<n+k-1$ in Theorem \ref{thm001},
\begin{align*}
Q[\varphi]=&
Q^{\ast}[\varphi]+O(1)\varepsilon^{\frac{n+k-1-m}{(n+k-1)(m+1)}}.
\end{align*}

$(c)$ if ({\bf{\em S2}}) holds in Theorem \ref{coro002},
\begin{align*}
Q[\varphi]=&
Q^{\ast}[\varphi]+O(1)\varepsilon^{\frac{m+n-2}{(m+1)(2m+n-2)}}.
\end{align*}

\end{lemma}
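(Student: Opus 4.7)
The plan is to reduce $Q[\varphi]$ to an explicit $(n-1)$-dimensional integral over the bottom disc $B_R'$ of the form $\int\varphi/\delta\,dx'$ by inserting the approximations $v_1\approx\bar u$ and $v_0\approx\bar u_0$ from Theorem~\ref{thm002} and Lemma~\ref{lem001}. First I apply integration by parts to the harmonic pair $(v_0,v_1)$, using the boundary conditions in (\ref{con005}) and (\ref{con006}), to obtain the energy identity
\begin{equation*}
Q[\varphi]=\int_{\partial D_1}\frac{\partial v_0}{\partial\nu}\bigg|_+=-\int_{\Omega}\nabla v_0\cdot\nabla v_1\,dx.
\end{equation*}
Splitting $\Omega=\Omega_R\cup(\Omega\setminus\Omega_R)$, the contribution from $\Omega\setminus\Omega_R$ is $O(1)$ by the $L^\infty$ bounds on $\nabla v_0$, $\nabla v_1$ away from the thin gap supplied by Theorem~\ref{thm002} and Lemma~\ref{lem001}. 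On $\Omega_R$ I substitute $\nabla v_0=\nabla\bar u_0+R_0$ and $\nabla v_1=\nabla\bar u+R_1$, where $|R_0|,|R_1|\lesssim\delta^{1-2/m}(|\varphi(x',h(x'))|+\delta^{1/m}\|\varphi\|_{C^2})$, and verify that every cross term and the $R_0R_1$ product integrate to $O(1)$ after vertical Fubini (using $\int_{\Omega_R}\delta^{-2}\,dx=\int_{B_R'}\delta^{-1}\,dx'$ repeatedly).

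Next I evaluate the principal term $-\int_{\Omega_R}\nabla\bar u_0\cdot\nabla\bar u\,dx$. From $\bar u_0=\varphi(x',h(x'))(1-\bar u)$ on $\Omega_R$, the dominant piece $\varphi(x',h)|\nabla\bar u|^2$ survives vertical integration and collapses the quantity (up to $O(1)$) to $\int_{B_R'}\varphi(x',h(x'))/\delta(x')\,dx'$. After inserting $\varphi(x',h(x'))=\eta|x'|^k+O(|x'|^{k+1})$ from condition (S1) and the Taylor expansion $\delta(x')=\varepsilon+\lambda|x'|^m+O(|x'|^{m+1})$, the rescaling $x'=\varepsilon^{1/m}y'$ produces
\begin{equation*}
\eta\,\varepsilon^{\frac{n+k-1}{m}-1}\int_{B'_{R\varepsilon^{-1/m}}}\frac{|y'|^k}{1+\lambda|y'|^m}\,dy'+(\text{lower order}).
\end{equation*}
For case $(a)$ with $m>n+k-1$, the rescaled integral converges over $\mathbb{R}^{n-1}$, and polar coordinates together with the beta/Gamma identity
\begin{equation*}
\int_0^\infty\frac{s^{a-1}}{1+s^m}\,ds=\frac{1}{m}\,\Gamma\!\left(\tfrac{a}{m}\right)\Gamma\!\left(1-\tfrac{a}{m}\right),\qquad a=n+k-1,
\end{equation*}
deliver the claimed coefficient $(n-1)\omega_{n-1}\eta\,\Gamma^{n+k}_m/(m\lambda^{(n+k-1)/m})$. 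At the critical rate $m=n+k-1$ a logarithmic divergence replaces $\varepsilon^{(n+k-1)/m-1}$ by $|\ln\varepsilon|$. Comparing the three error sources $R_0$, $R_1$, and the Taylor remainder $O(|x'|^{m+1})$ in $\delta$ against the leading factor $\rho_k(n,m;\varepsilon)$ yields the stated relative errors $\varepsilon^{1/m}$, $\varepsilon^{1/m}|\ln\varepsilon|$, $|\ln\varepsilon|^{-1}$ in the three sub-cases.

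For $(b)$ and $(c)$, where $Q^\ast[\varphi]$ is defined, I instead compare $Q[\varphi]$ to $Q^\ast[\varphi]$ directly. Applying the same identity in $\Omega^\ast$ using $v_1^\ast$ from (\ref{con022}) gives the analogous formula $Q^\ast[\varphi]=\int_{B_R'}\varphi/(h_1-h)\,dx'+O(1)$, which is finite when $m<n+k-1$. The difference $Q[\varphi]-Q^\ast[\varphi]$ then reduces to $\int_{B_R'}\varphi\,[1/\delta-1/(h_1-h)]\,dx'$ plus $H^1$-error terms from $v_0-v_0^\ast$ and $v_1-v_1^\ast$ away from the contact point. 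To obtain the sharp rate I split at an intermediate radius $r_\varepsilon$: for $|x'|<r_\varepsilon$ the pointwise estimate $|1/\delta-1/(h_1-h)|\lesssim\varepsilon/(\lambda|x'|^m)^2$ is integrated directly, while for $|x'|>r_\varepsilon$ an $H^1$-energy estimate on the harmonic difference $v_0-v_0^\ast$ on the common domain $\Omega\cap\Omega^\ast$ provides a residual of size $r_\varepsilon^{-\alpha}$, with $\alpha$ dictated by the boundary regularity. Optimizing $r_\varepsilon$ yields $\tfrac{n+k-1-m}{(n+k-1)(m+1)}$ in $(b)$. Under (S2), the leading integral $\int\varphi/\delta\,dx'$ vanishes by odd symmetry in $x_{i_0}$, so the residual comes from one order higher in the Taylor expansion of $\varphi$, and the same two-scale optimization produces $\tfrac{m+n-2}{(m+1)(2m+n-2)}$.

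The hardest part is pinning down these sharp exponents. A rough splitting already delivers some positive power of $\varepsilon$, but the denominators $(n+k-1)(m+1)$ and $(m+1)(2m+n-2)$ emerge only after carefully balancing the interior contribution against the $H^1$-energy estimate for $v_0-v_0^\ast$ on a shrinking annulus around $\{0'\}$. Constructing a usable comparison auxiliary on $\Omega\cap\Omega^\ast$ and controlling its $H^1$ norm in $\varepsilon$ near the degenerate contact point is the delicate step; it will require the same De~Giorgi/Moser-type iteration that underlies the proofs of Theorem~\ref{thm002} and Lemma~\ref{lem001}.
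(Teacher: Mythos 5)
Your proposal correctly identifies the two structural ingredients of the proof — (i) collapse $Q[\varphi]$ to an explicit thin-gap integral using $\nabla v_1\approx\nabla\bar u$ and $\nabla v_0\approx\nabla\bar u_0$ from Theorem~\ref{thm002} and Lemma~\ref{lem001}, and (ii) a two-scale split around the contact point to compare $Q[\varphi]$ with $Q^{\ast}[\varphi]$ — but implements them somewhat differently from the paper. For part $(a)$ the paper works directly with the surface integral $\int_{\partial D_1}\partial_n v_0\,\nu_n + \sum_i\int_{\partial D_1}\partial_i v_0\,\nu_i$, plugging in the pointwise bound \eqref{con018}--\eqref{con01818} on $\nabla v_{0}$ and the normal-component estimates \eqref{con021}; this is a bit more direct than your bulk energy identity $Q[\varphi]=-\int_\Omega\nabla v_0\cdot\nabla v_1$, which requires verifying that all the tangential and cross terms Fubini down to $O(1)$. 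For parts $(b)$ and $(c)$, the paper transfers the difference to the outer boundary via $Q[\varphi]-Q^{\ast}[\varphi]=\int_{\partial D}\tfrac{\partial(v_1-v_1^{\ast})}{\partial\nu}\varphi$, then estimates $v_1-v_1^{\ast}$ by the maximum principle (not an $H^1$-energy estimate) together with interior/boundary regularity on the outer annulus, and by the pointwise estimates \eqref{con023}--\eqref{con025} on $\bar u-\bar u^{\ast}$ and \eqref{step2} on $v_1-\bar u$ and $v_1^{\ast}-\bar u^{\ast}$ in the inner cylinder; the sharp exponent is then obtained by balancing the outer radial parameter $\tilde\gamma$ against the inner contribution. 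Your $H^1$-energy route for $v_0-v_0^{\ast}$ is a genuine alternative, but you have not verified that it recovers the same sharp exponents $\tfrac{n+k-1-m}{(n+k-1)(m+1)}$ and $\tfrac{m+n-2}{(m+1)(2m+n-2)}$; an energy estimate on a shrinking annulus generically loses compared to a pointwise maximum-principle bound, so this step needs to be filled in. Two further details you gloss over: the crescent regions $\partial D_1\setminus D_1^{\ast}$ and $\partial D_1^{\ast}\setminus D_1$ (which have thickness $\varepsilon$ and contribute nontrivially, cf.~\eqref{con029}), and the fact that your claimed pointwise inner estimate $\int_{|x'|<r_\varepsilon}\varphi\cdot\varepsilon/|x'|^{2m}\,dx'$ diverges at $0$ when $2m>n+k-1$, so it must be cut at the scale $\varepsilon^{1/m}$ and the $\delta$-regularisation kept there. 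Your observation that under \textbf{(S2)} the leading contribution vanishes by odd symmetry matches the paper's identification $\mathcal{A}_u^2=0$.
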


\begin{proof}
{\bf Step 1.} Proof of $(a)$. Note that the unit outward normal $\nu$ to $\partial D_{1}$ is given by
$$\nu=\frac{(\nabla_{x'}h_{1}(x'),-1)}{\sqrt{1+|\nabla_{x'}h_{1}(x')|^{2}}},\quad\;\,\mathrm{in}\;\Omega_{R}.$$
In light of ({\bf{\em H2}}), we obtain that for $i=1,\cdots,n-1$,
\begin{align}\label{con021}
|\nu_{i}|\leq C|x'|^{m-1},\quad|\nu_{n}|\leq1,\quad\;\,\mathrm{in}\;\Omega_{R}.
\end{align}
Recalling the definition of $Q[\varphi]$, it follows from (\ref{con018})--(\ref{con01818}) and (\ref{con021}) that
\begin{align*}
Q[\varphi]=&\int_{\partial D_{1}}\partial_{n}v_{0}\nu_{n}+\int_{\partial D_{1}}\sum^{n-1}_{i=1}\partial_{i}v_{0}\nu_{i}\\
=&\int_{|x'|<R}\frac{\eta|x'|^{k}}{\varepsilon+\lambda|x'|^{m}}+O(1)\int_{|x'|<R}\frac{\eta|x'|^{k+1}}{\varepsilon+\lambda|x'|^{m}}+O(1)\|\varphi\|_{C^{2}(\partial D)}\\
=&\frac{(n-1)\omega_{n-1}\eta\Gamma^{n+k}_{m}}{m\lambda^{\frac{n+k-1}{m}}}\begin{cases}
\varepsilon^{\frac{n+k-1}{m}-1}+O(1)\varepsilon^{\frac{n+k}{m}-1}\|\varphi\|_{C^{2}(\partial D)},&m>n+k,\\
\varepsilon^{-\frac{1}{m}}+O(1)|\ln\varepsilon|\|\varphi\|_{C^{2}(\partial D)},&m=n+k,\\
|\ln\varepsilon|+O(1)\|\varphi\|_{C^{2}(\partial D)},&m=n+k-1.
\end{cases}
\end{align*}

{\bf Step 2.} Proofs of $(b)$ and $(c)$. In view of the definitions of $Q[\varphi]$ and $Q^{\ast}[\varphi]$, it follows from integration by parts that
\begin{align*}
Q[\varphi]=\int_{\partial D}\frac{\partial v_{1}}{\partial\nu}\varphi(x),\quad\quad Q^{\ast}[\varphi]=\int_{\partial D}\frac{\partial v_{1}^{\ast}}{\partial\nu}\varphi(x),
\end{align*}
where $v_{1}$ and $v_{1}^{\ast}$ are defined by (\ref{con006}) and (\ref{con022}). Thus,
\begin{align*}
Q[\varphi]-Q^{\ast}[\varphi]=\int_{\partial D}\frac{\partial(v_{1}-v_{1}^{\ast})}{\partial\nu}\cdot\varphi(x).
\end{align*}

To estimate $v_{1}-v_{1}^{\ast}$, we first introduce a scar auxiliary functions $\bar{u}^{\ast}$ satisfying $\bar{u}^{\ast}=1$ on $\partial D_{1}^{\ast}\setminus\{0\}$, $\bar{u}^{\ast}=0$ on $\partial D$, and
$$\bar{u}^{\ast}=\frac{x_{n}-h(x')}{h_{1}(x')-h(x')},\quad\mathrm{in}\;\,\Omega_{2R}^{\ast},\quad\;\,\|\bar{u}^{\ast}\|_{C^{2}(\Omega^{\ast}\setminus\Omega_{R}^{\ast})}\leq C,$$
where $\Omega^{\ast}_{r}:=\Omega^{\ast}\cap\{|x'|<r\},$ $0<r\leq2R$. In view of ({\bf{\em H2}}), we obtain that for $x\in\Omega_{R}^{\ast}$,
\begin{align}\label{con023}
|\nabla_{x'}(\bar{u}-\bar{u}^{\ast})|\leq\frac{C}{|x'|},
\end{align}
and
\begin{align}\label{con025}
|\partial_{n}(\bar{u}-\bar{u}^{\ast})|\leq\frac{C\varepsilon}{|x'|^{m}(\varepsilon+|x'|^{m})}.
\end{align}
Applying Theorem \ref{thm002} to (\ref{con022}), it follows that for $x\in\Omega_{R}^{\ast}$,
\begin{align}\label{con026}
|\nabla(v_{1}^{\ast}-\bar{u}^{\ast})|\leq C|x'|^{m-2},
\end{align}
and
\begin{align}\label{con027}
|\nabla_{x'}v_{1}^{\ast}|\leq\frac{C}{|x'|},\quad\;|\partial_{n}v_{1}^{\ast}|\leq\frac{C}{|x'|^{m}}.
\end{align}
For $0<r<R$, denote
\begin{align}\label{con028}
\mathcal{C}_{r}:=\left\{x\in\mathbb{R}^{n}\Big|\;|x'|<r,\,\frac{1}{2}\min_{|x'|\leq r}h(x')\leq x_{n}\leq\varepsilon+2\max_{|x'|\leq r}h_{1}(x')\right\}.
\end{align}
We now divide into two steps to estimate $|Q[\varphi]-Q^{\ast}[\varphi]|$.

{\bf Step 2.1.} Note that $v_{1}-v_{1}^{\ast}$ solves
\begin{align*}
\begin{cases}
\Delta(v_{1}-v_{1}^{\ast})=0,&\mathrm{in}\;\,D\setminus(\overline{D_{1}\cup D_{1}^{\ast}}),\\
v_{1}-v_{1}^{\ast}=1-v_{1}^{\ast},&\mathrm{on}\;\,\partial D_{1}\setminus D_{1}^{\ast},\\
v_{1}-v_{1}^{\ast}=v_{1}-1,&\mathrm{on}\;\,\partial D_{1}^{\ast}\setminus(D_{1}\cup\{0\}),\\
v_{1}-v_{1}^{\ast}=0,&\mathrm{on}\;\,\partial D.
\end{cases}
\end{align*}
We first estimate $|v_{1}-v_{1}^{\ast}|$ on $\partial(D_{1}\cup D_{1}^{\ast})\setminus\mathcal{C}_{\varepsilon^{\gamma}}$, where $0<\gamma<1/2$ to be determined later. In light of the definition of $v_{1}^{\ast}$, we derive that
$$|\partial_{n}v_{1}^{\ast}|\leq C,\quad\;\,\mathrm{in}\;\Omega^{\ast}\setminus\Omega^{\ast}_{R}.$$
Therefore,
\begin{align}\label{con029}
|v_{1}-v_{1}^{\ast}|\leq C\varepsilon,\quad\;\,\mathrm{for}\;\,x\in\partial D_{1}\setminus D_{1}^{\ast}.
\end{align}
It follows from (\ref{con015}) that
\begin{align}\label{con030}
|v_{1}-v_{1}^{\ast}|\leq C\varepsilon^{1-m\gamma},\quad\;\,\mathrm{on}\;\,\partial D_{1}^{\ast}\setminus(D_{1}\cup\mathcal{C}_{\varepsilon^{\gamma}}).
\end{align}
Combining Theorem \ref{thm002} and (\ref{con025})--(\ref{con026}), we obtain that for $x\in\Omega_{R}^{\ast}\cap\{|x'|=\varepsilon^{\gamma}\}$,
\begin{align*}
|\partial_{n}(v_{1}-v_{1}^{\ast})|\leq&|\partial_{n}(v_{1}-\bar{u})|+|\partial_{n}(\bar{u}-\bar{u}^{\ast})|+|\partial_{n}(v_{1}^{\ast}-\bar{u}^{\ast})|\\
\leq&C\left(\frac{1}{\varepsilon^{2m\gamma-1}}+\varepsilon^{(m-2)\gamma}\right),
\end{align*}
which together with $v_{1}-v_{1}^{\ast}=0$ on $\partial D$ yields that
\begin{align}\label{con031}
|(v_{1}-v_{1}^{\ast})(x',x_{n})|=&|(v_{1}-v_{1}^{\ast})(x',x_{n})-(v_{1}-v_{1}^{\ast})(x',h(x'))|\notag\\
\leq&C\big(\varepsilon^{1-m\gamma}+\varepsilon^{2(m-1)\gamma}\big).
\end{align}
Take $\gamma=\frac{1}{m+1}$. Then, it follows from (\ref{con029})--(\ref{con031}) that
$$|v_{1}-v_{1}^{\ast}|\leq C\varepsilon^{\frac{1}{m+1}},\quad\;\,\mathrm{on}\;\,\partial\big(D\setminus\big(\overline{D_{1}\cup D_{1}^{\ast}\cup\mathcal{C}_{\varepsilon^{\frac{1}{m+1}}}}\big)\big).$$
Making use of the maximum principle, we obtain
$$|v_{1}-v_{1}^{\ast}|\leq C\varepsilon^{\frac{1}{m+1}},\quad\;\,\mathrm{in}\;\,D\setminus\big(\overline{D_{1}\cup D_{1}^{\ast}\cup\mathcal{C}_{\varepsilon^{\frac{1}{m+1}}}}\big).$$
This, together with the standard interior and boundary estimates, leads to that, for any $\frac{m-1}{m(m+1)}<\tilde{\gamma}<\frac{1}{m+1}$,
$$|\nabla(v_{1}-v_{1}^{\ast})|\leq C\varepsilon^{m\tilde{\gamma}-\frac{m-1}{m+1}},\quad\;\,\mathrm{in}\;\,D\setminus\big(\overline{D_{1}\cup D_{1}^{\ast}\cup\mathcal{C}_{\varepsilon^{\frac{1}{m+1}-\tilde{\gamma}}}}\big),$$
which implies that
\begin{align}\label{con032}
|\mathcal{A}^{out}|:=\left|\int_{\partial D\setminus\mathcal{C}_{\varepsilon^{\frac{1}{m+1}-\tilde{\gamma}}}}\frac{\partial(v_{1}-v_{1}^{\ast})}{\partial\nu}\cdot\varphi(x)\right|\leq C\|\varphi\|_{L^{\infty}(\partial D)}\varepsilon^{m\tilde{\gamma}-\frac{m-1}{m+1}},
\end{align}
where $\frac{m-1}{m(m+1)}<\tilde{\gamma}<\frac{1}{m+1}$ to be determined later.

{\bf Step 2.2.} We further estimate
\begin{align*}
\mathcal{A}^{in}:=&\int_{\partial D\cap\mathcal{C}_{\varepsilon^{\frac{1}{m+1}-\tilde{\gamma}}}}\frac{\partial(v_{1}-v_{1}^{\ast})}{\partial\nu}\cdot\varphi(x)\\
=&\int_{\partial D\cap\mathcal{C}_{\varepsilon^{\frac{1}{m+1}-\tilde{\gamma}}}}\frac{\partial(w_{1}-w_{1}^{\ast})}{\partial\nu}\cdot\varphi(x)+\int_{\partial D\cap\mathcal{C}_{\varepsilon^{\frac{1}{m+1}-\tilde{\gamma}}}}\frac{\partial(\bar{u}-\bar{u}^{\ast})}{\partial\nu}\cdot\varphi(x)\\
=&:\mathcal{A}_{w}+\mathcal{A}_{u},
\end{align*}
where $w_{1}=v_{1}-\bar{u}$ and $w_{1}^{\ast}=v_{1}^{\ast}-\bar{u}^{\ast}$. To begin with, applying Theorem \ref{thm002}, we obtain that
\begin{align}\label{ZW12}
|\mathcal{A}_{w}|\leq C\eta\int_{\partial D\cap\mathcal{C}_{\varepsilon^{\frac{1}{m+1}-\tilde{\gamma}}}}|x'|^{m+k-2}\leq C\eta\varepsilon^{(\frac{1}{m+1}-\tilde{\gamma})(m+n+k-3)}.
\end{align}

To estimate $\mathcal{A}_{u}$, we split it into two parts as follows.
\begin{align*}
\mathcal{A}_{u}=&\int_{\partial D\cap\mathcal{C}_{\varepsilon^{\frac{1}{m+1}-\tilde{\gamma}}}}\sum^{n-1}_{i=1}\partial_{i}(\bar{u}-\bar{u}^{\ast})\nu_{i}\varphi(x)+\int_{\partial D\cap\mathcal{C}_{\varepsilon^{\frac{1}{m+1}-\tilde{\gamma}}}}\partial_{n}(\bar{u}-\bar{u}^{\ast})\nu_{n}\varphi(x)\\
=&:\mathcal{A}^{1}_{u}+\mathcal{A}^{2}_{u}.
\end{align*}

{\bf Case 1.} If ({\bf{\em S1}}) holds for $m<n+k-1$ in Theorem \ref{thm001}, owing to (\ref{con021}) and (\ref{con023})--(\ref{con025}), we obtain that
\begin{align*}
|\mathcal{A}^{1}_{u}|\leq C\eta\varepsilon^{\left(\frac{1}{m+1}-\tilde{\gamma}\right)(n+k+m-3)},\;\,|\mathcal{A}^{2}_{u}|\leq C\eta\varepsilon^{\left(\frac{1}{m+1}-\tilde{\gamma}\right)(n+k-m-1)}.
\end{align*}
Then
\begin{align*}
|\mathcal{A}_{u}|\leq C\eta\varepsilon^{\left(\frac{1}{m+1}-\tilde{\gamma}\right)(n+k-m-1)}.
\end{align*}
This, together with (\ref{con032})--(\ref{ZW12}) and picking $\tilde{\gamma}=\frac{n+k-2}{(n+k-1)(m+1)}$, yields that
\begin{align*}
|Q[\varphi]-Q^{\ast}[\varphi]|\leq&C(\eta+\|\varphi\|_{L^{\infty}(\partial D)})\varepsilon^{\frac{n+k-m-1}{(n+k-1)(m+1)}}.
\end{align*}

{\bf Case 2.} If ({\bf{\em S2}}) holds in Theorem \ref{coro002}, based on the fact that the integrating domain is symmetric with respect to $x_{i}$, $i=1,\cdots,n-1$, we have
\begin{align*}
|\mathcal{A}^{1}_{u}|\leq C\eta\varepsilon^{\left(\frac{1}{m+1}-\tilde{\gamma}\right)(n+m-2)},\;\,\mathcal{A}^{2}_{u}=0.
\end{align*}
Hence,
\begin{align*}
|\mathcal{A}_{u}|\leq C\eta\varepsilon^{\left(\frac{1}{m+1}-\tilde{\gamma}\right)(n+m-2)}.
\end{align*}
This, together with (\ref{con032})--(\ref{ZW12}) and taking $\tilde{\gamma}=\frac{2m+n-3}{(2m+n-2)(m+1)}$, leads to that
\begin{align*}
|Q[\varphi]-Q^{\ast}[\varphi]|\leq C(\eta+\|\varphi\|_{L^{\infty}(\partial D)})\varepsilon^{\frac{m+n-2}{(m+1)(2m+n-2)}}.
\end{align*}

Consequently, it follows from {\bf Step 1} and {\bf Step 2} that Lemma \ref{lem002} holds.

\end{proof}

\subsection{Expansion of $a_{11}$}
Before stating the asymptotic of $a_{11}$ with respect to $\varepsilon$, we first introduce a notation used in the following. Denote
\begin{align}
A:=&\int_{\Omega^{\ast}\setminus\Omega_{R}^{\ast}}|\nabla v_{1}^{\ast}|^{2}+2\int_{\Omega_{R}^{\ast}}\nabla\bar{u}^{\ast}\cdot\nabla(v_{1}^{\ast}-\bar{u}^{\ast})\notag\\
&+\int_{\Omega^{\ast}_{R}}\big(|\nabla(v_{1}^{\ast}-\bar{u}^{\ast})|^{2}+|\partial_{x'}\bar{u}^{\ast}|^{2}\big).\label{con03333}
\end{align}

\begin{lemma}\label{lem003}
Assume as in Theorem \ref{thm001} and Theorem \ref{coro002}. Then, for a sufficiently small $\varepsilon>0$,

$(i)$ for $m\geq n-1$,
\begin{align*}
a_{11}=&
\frac{(n-1)\omega_{n-1}\Gamma^{n}_{m}}{m\lambda^{\frac{n-1}{m}}}\rho_{0}(n,m;\varepsilon)
\begin{cases}
1+O(1)\varepsilon^{\frac{1}{m}},&m>n,\\
1+O(1)\varepsilon^{\frac{1}{m}}|\ln\varepsilon|,&m=n,\\
1+O(1)|\ln\varepsilon|^{-1},&m=n-1;
\end{cases}
\end{align*}

$(ii)$ for $m<n-1$,
\begin{align*}
a_{11}=&a_{11}^{\ast}+O(1)\varepsilon^{\frac{1}{6}},
\end{align*}
where $a_{11}^{\ast}$ is defined by (\ref{zz1}).

\end{lemma}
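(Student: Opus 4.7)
The plan is to decompose $a_{11}=\int_{\Omega}|\nabla v_1|^2$ using the auxiliary function $\bar u$ from (\ref{con009}) and the pointwise gradient bound supplied by Theorem~\ref{thm002}. Setting $w_1:=v_1-\bar u$, Theorem~\ref{thm002} gives $|\nabla w_1|\le C\delta^{1-2/m}$ in $\Omega_R$ and $\|\nabla v_1\|_{L^\infty(\Omega\setminus\Omega_R)}\le C$. Expanding the square inside $\Omega_R$ yields
\begin{align*}
a_{11}=\int_{\Omega_R}|\partial_n\bar u|^2+\int_{\Omega_R}|\partial_{x'}\bar u|^2+2\int_{\Omega_R}\nabla\bar u\cdot\nabla w_1+\int_{\Omega_R}|\nabla w_1|^2+\int_{\Omega\setminus\Omega_R}|\nabla v_1|^2,
\end{align*}
so the task reduces to isolating the leading singular piece $\int_{\Omega_R}|\partial_n\bar u|^2$ and controlling the rest.

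For case (i) ($m\ge n-1$), I would evaluate $\int_{\Omega_R}|\partial_n\bar u|^2=\int_{|x'|<R}dx'/\delta(x')$ by passing to polar coordinates and substituting $y=(\lambda/\varepsilon)^{1/m}|x'|$, producing $\frac{(n-1)\omega_{n-1}}{m\lambda^{(n-1)/m}}\varepsilon^{(n-1)/m-1}\int_0^{(\lambda/\varepsilon)^{1/m}R}\frac{s^{(n-1)/m-1}}{1+s}\,ds$. As $\varepsilon\to0$ this truncated Beta integral converges to $\Gamma^{n}_{m}$ for $m>n-1$, with tail error $O(\varepsilon^{(m-n+1)/m})$; the borderline cases $m=n$ and $m=n-1$ give the claimed $|\ln\varepsilon|$ behaviour by direct computation. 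Next, (H2) gives $|\partial_{x'}\bar u|\le C|x'|^{m-1}/\delta$, so $\int_{\Omega_R}|\partial_{x'}\bar u|^2=O(1)$; Theorem~\ref{thm002} similarly yields $\int_{\Omega_R}|\nabla w_1|^2=O(1)$ and, via Cauchy--Schwarz, $|\int_{\Omega_R}\nabla\bar u\cdot\nabla w_1|=O(1)$; the outer piece is a universal constant. Dividing these $O(1)$ bounds by the leading term $\rho_{0}(n,m;\varepsilon)$ produces the relative error rates $\varepsilon^{1/m}$, $\varepsilon^{1/m}|\ln\varepsilon|$, $|\ln\varepsilon|^{-1}$ in the three subcases, the extra $|\ln\varepsilon|$ in the $m=n$ case coming from the next-order term in the Beta expansion. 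Finally, the $O(|x'|^{m+1})$ perturbation of $\delta$ allowed by (H1) is absorbed into the same error by direct substitution.

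For case (ii) ($m<n-1$), apply the analogous expansion to $a_{11}^{\ast}=\int_{\Omega^{\ast}}|\nabla v_1^{\ast}|^2$ using $\bar u^{\ast}$ and $w_1^{\ast}:=v_1^{\ast}-\bar u^{\ast}$. A direct algebraic identity shows
\begin{align*}
A=a_{11}^{\ast}-\int_{\Omega_R^{\ast}}|\partial_n\bar u^{\ast}|^2,
\end{align*}
so the claim reduces to bounding the term-by-term differences
\begin{align*}
\int_{\Omega_R}|\partial_n\bar u|^2-\int_{\Omega_R^{\ast}}|\partial_n\bar u^{\ast}|^2,\quad\int_{\Omega_R}|\partial_{x'}\bar u|^2-\int_{\Omega_R^{\ast}}|\partial_{x'}\bar u^{\ast}|^2,
\end{align*}
together with the corresponding cross and quadratic remainder differences and $\int_{\Omega\setminus\Omega_R}|\nabla v_1|^2-\int_{\Omega^{\ast}\setminus\Omega_R^{\ast}}|\nabla v_1^{\ast}|^2$. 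The first two follow from Taylor expanding $\delta(x')=\varepsilon+(h_1-h)$ inside the now-convergent integrals (convergence using $m<n-1$), combined with (\ref{con023})--(\ref{con025}). The remaining differences are controlled by combining the $L^\infty$ estimate $|v_1-v_1^{\ast}|\le C\varepsilon^{1/(m+1)}$ from the proof of Lemma~\ref{lem002}, elliptic $C^1$ regularity away from the contact point, and a direct narrow-region estimate inside a thin cylinder $\mathcal{C}_{\varepsilon^{\gamma}}$ as in Lemma~\ref{lem002}; the worst of the resulting exponents, after optimising $\gamma$, gives the rate $\varepsilon^{1/6}$.

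The main obstacle is verifying that the cross term $2\int_{\Omega_R}\nabla\bar u\cdot\nabla w_1$ and the quadratic piece $\int_{\Omega_R}|\nabla w_1|^2$ contribute with the sharp relative rates rather than merely $o(\rho_{0})$; this requires balancing the near-contact region $|x'|\lesssim\varepsilon^{1/m}$, where $\delta\sim\varepsilon$ and $|\nabla\bar u|$ is largest, against the outer annular region $\varepsilon^{1/m}\lesssim|x'|\le R$, where $\delta\sim|x'|^{m}$, without spoiling the leading $\Gamma^{n}_{m}$ coefficient. In case (ii) the delicate step is matching $a_{11}$ to the $\varepsilon$-independent quantity $a_{11}^{\ast}$ to accuracy $\varepsilon^{1/6}$, which requires the full cross-over argument already used in the proof of Lemma~\ref{lem002}.
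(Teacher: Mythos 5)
Your global decomposition of $a_{11}$ into $\int_{\Omega_R}|\partial_n\bar u|^2+\int_{\Omega_R}|\partial_{x'}\bar u|^2+2\int_{\Omega_R}\nabla\bar u\cdot\nabla w_1+\int_{\Omega_R}|\nabla w_1|^2+\int_{\Omega\setminus\Omega_R}|\nabla v_1|^2$ is a reasonable starting point and avoids the intermediate region the paper introduces; the identity $A=a_{11}^*-\int_{\Omega_R^*}|\partial_n\bar u^*|^2$ that you invoke is also exactly what underlies the paper's bookkeeping. But two steps do not go through as written.

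First, in case (i), the claim that ``via Cauchy--Schwarz, $|\int_{\Omega_R}\nabla\bar u\cdot\nabla w_1|=O(1)$'' is false as stated: $\|\nabla\bar u\|_{L^2(\Omega_R)}\sim\rho_0^{1/2}$ while $\|\nabla w_1\|_{L^2(\Omega_R)}=O(1)$, so unweighted Cauchy--Schwarz yields $O(\rho_0^{1/2})$, not $O(1)$. For $m=n$, $\rho_0^{1/2}=\varepsilon^{-1/(2m)}$, which after dividing by $\rho_0$ gives a relative error $\varepsilon^{1/(2m)}$, much larger than the claimed $\varepsilon^{1/m}|\ln\varepsilon|$. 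The correct argument is elementary but not Cauchy--Schwarz: one integrates directly along the fibers, $\int_{\Omega_R}|\nabla\bar u||\nabla w_1|\le C\int_{\Omega_R}\delta^{-1}\cdot\delta^{1-2/m}\le C\int_{|x'|<R}\delta^{1-2/m}dx'=O(1)$ since $1-2/m\ge0$ for $m\ge2$. So the result is recoverable, but the tool you name does not deliver it.

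Second, and more seriously, in case (ii) the ``Taylor expansion'' step fails for a whole range of $m$. The difference $\int_{\Omega_R}|\partial_n\bar u|^2-\int_{\Omega_R^*}|\partial_n\bar u^*|^2$ equals $\int_{|x'|<R}\big(\tfrac{1}{\varepsilon+h_1-h}-\tfrac{1}{h_1-h}\big)dx'$ up to lower-order boundary adjustments, and the linear correction behaves like $\int\frac{\varepsilon}{(h_1-h)^2}dx'\sim\varepsilon\int\frac{dx'}{|x'|^{2m}}$, which is divergent near $x'=0$ whenever $2m\ge n-1$. The condition $m<n-1$ does not prevent this: the sub-range $(n-1)/2\le m<n-1$ is nonempty whenever $n\ge5$, and there the naive expansion breaks down. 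The same obstruction appears in the comparison $\partial_n\bar u$ versus $\partial_n\bar u^*$ via (\ref{con025}): $|\partial_n(\bar u-\bar u^*)|\le\frac{C\varepsilon}{|x'|^m(\varepsilon+|x'|^m)}$ is not square-integrable against the natural measure near the contact point once $2m\ge n-1$. This is precisely why the paper introduces the cut-off radius $\varepsilon^{\bar\gamma}$ with $\bar\gamma=\frac{1}{6m}$: inside $\Omega_{\varepsilon^{\bar\gamma}}$ it keeps $\bar u$ (with $\varepsilon$ in the denominator) and never compares to $\bar u^*$ there, while outside it compares $v_1$ to $v_1^*$ via the gradient estimate (\ref{con035}). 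Your proposal collapses this two-scale structure into a single Taylor expansion, and the resulting error integral simply does not converge in general. To repair it you would need essentially the same region-split the paper uses; the remaining assertion that the cross and quadratic differences follow from the $L^\infty$ estimate in Lemma~\ref{lem002} together with interior/boundary regularity in a $\mathcal C_{\varepsilon^\gamma}$ cylinder is aligned with the paper, but it cannot substitute for the missing near-contact analysis.
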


\begin{proof}
Fix $\bar{\gamma}=\frac{1}{6m}$. We first split $a_{11}$ into three parts as follows.
\begin{align*}
a_{11}=\int_{\Omega_{\varepsilon^{\bar{\gamma}}}}|\nabla v_{1}|^{2}+\int_{\Omega_{R}\setminus\Omega_{\varepsilon^{\bar{\gamma}}}}|\nabla v_{1}|^{2}+\int_{\Omega\setminus\Omega_{R}}|\nabla v_{1}|^{2}=:\mathrm{I}+\mathrm{II}+\mathrm{III}.
\end{align*}

{\bf Step 1.} As for $\mathrm{I}$, recalling the definition of $\bar{u}$ and using Theorem \ref{thm002}, we obtain that
\begin{align}\label{con03365}
\mathrm{I}=&\int_{\Omega_{\varepsilon^{\bar{\gamma}}}}|\partial_{n}\bar{u}|^{2}+\int_{\Omega_{\varepsilon^{\bar{\gamma}}}}|\partial_{x'}\bar{u}|^{2}+2\int_{\Omega_{\varepsilon^{\bar{\gamma}}}}\nabla\bar{u}\cdot\nabla(v_{1}-\bar{u})+\int_{\Omega_{\varepsilon^{\bar{\gamma}}}}|\nabla(v_{1}-\bar{u})|^{2}\notag\\
=&\int_{|x'|<\varepsilon^{\bar{\gamma}}}\frac{dx'}{\varepsilon+h_{1}(x')-h(x')}+O(1)\varepsilon^{\frac{n+m-3}{6m}}.
\end{align}

For the second term $\mathrm{II}$, we further decompose it into three parts as follows
\begin{align*}
\mathrm{II}_{1}=&\int_{(\Omega_{R}\setminus\Omega_{\varepsilon^{\bar{\gamma}}})\setminus(\Omega^{\ast}_{R}\setminus\Omega^{\ast}_{\varepsilon^{\bar{\gamma}}})}|\nabla v_{1}|^{2},\\
\mathrm{II}_{2}=&\int_{\Omega^{\ast}_{R}\setminus\Omega^{\ast}_{\varepsilon^{\bar{\gamma}}}}|\nabla(v_{1}-v_{1}^{\ast})|^{2}+2\int_{\Omega^{\ast}_{R}\setminus\Omega^{\ast}_{\varepsilon^{\bar{\gamma}}}}\nabla v_{1}^{\ast}\cdot\nabla(v_{1}-v_{1}^{\ast}),\\
\mathrm{II}_{3}=&\int_{\Omega^{\ast}_{R}\setminus\Omega^{\ast}_{\varepsilon^{\bar{\gamma}}}}|\nabla v_{1}^{\ast}|^{2}.
\end{align*}
Due to the fact that the thickness of $(\Omega_{R}\setminus\Omega_{\varepsilon^{\bar{\gamma}}})\setminus(\Omega^{\ast}_{R}\setminus\Omega^{\ast}_{\varepsilon^{\bar{\gamma}}})$ is $\varepsilon$, it follows from (\ref{con015}) that
\begin{align}\label{con0333355}
\mathrm{II}_{1}\leq&C\varepsilon\int_{\varepsilon^{\bar{\gamma}}<|x'|<R}\frac{dx'}{|x'|^{2m}}\leq C
\begin{cases}
\varepsilon^{\frac{4m+n-1}{6m}},&m>\frac{n-1}{2},\\
\varepsilon|\ln\varepsilon|,&m=\frac{n-1}{2},\\
\varepsilon,&m<\frac{n-1}{2}.
\end{cases}
\end{align}
By picking $\gamma=\frac{1}{2m}$ in {\bf Step 2.1} of the proof of Lemma \ref{lem002}, it follows from (\ref{con029})--(\ref{con031}) and the maximum principle that
\begin{align*}
|v_{1}-v_{1}^{\ast}|\leq C\varepsilon^{\frac{1}{2}},\quad\;\,\mathrm{in}\;\,D\setminus\big(\overline{D_{1}\cup D_{1}^{\ast}\cup\mathcal{C}_{\varepsilon^{\frac{1}{2m}}}}\big).
\end{align*}
Similarly as before, utilizing the standard interior and boundary estimates, we derive that
\begin{align}\label{con035}
|\nabla(v_{1}-v_{1}^{\ast})|\leq C\varepsilon^{\frac{1}{6}},\quad\;\,\mathrm{in}\;\,D\setminus\big(\overline{D_{1}\cup D_{1}^{\ast}\cup\mathcal{C}_{\varepsilon^{\frac{1}{3m}}}}\big).
\end{align}
Then combining (\ref{con027}) and (\ref{con035}), we obtain that
\begin{align}\label{con036}
|\mathrm{II}_{2}|\leq&C\varepsilon^{\frac{1}{6}}.
\end{align}

As for $\mathrm{II}_{3}$, it follows from (\ref{con026}) and (\ref{con027}) that
\begin{align*}
\mathrm{II}_{3}=&\int_{\Omega^{\ast}_{R}\setminus\Omega^{\ast}_{\varepsilon^{\bar{\gamma}}}}|\nabla\bar{u}^{\ast}|^{2}+2\int_{\Omega^{\ast}_{R}\setminus\Omega^{\ast}_{\varepsilon^{\bar{\gamma}}}}\nabla\bar{u}^{\ast}\cdot\nabla(v_{1}^{\ast}-\bar{u}^{\ast})+\int_{\Omega^{\ast}_{R}\setminus\Omega^{\ast}_{\varepsilon^{\bar{\gamma}}}}|\nabla(v_{1}^{\ast}-\bar{u}^{\ast})|^{2}\\
=&\int_{\varepsilon^{\bar{\gamma}}<|x'|<R}\frac{dx'}{h_{1}(x')-h(x')}+A-\int_{\Omega^{\ast}\setminus\Omega_{R}^{\ast}}|\nabla v_{1}^{\ast}|^{2}+O(1)\varepsilon^{(n+m-3)\bar{\gamma}},
\end{align*}
where $A$ is defined by (\ref{con03333}). This, together with (\ref{con0333355}) and (\ref{con036}), leads to that
\begin{align}\label{con037}
\mathrm{II}=&\int_{\varepsilon^{\bar{\gamma}}<|x'|<R}\frac{dx'}{h_{1}(x')-h(x')}+A-\int_{\Omega^{\ast}\setminus\Omega_{R}^{\ast}}|\nabla v_{1}^{\ast}|^{2}\notag\\
&+O(1)
\begin{cases}
\varepsilon^{\frac{m-1}{6m}},&n=2,\\
\varepsilon^{\frac{1}{6}},&n\geq3.
\end{cases}
\end{align}

For the last term $\mathrm{III}$, due to the fact that $|\nabla v_{1}|$ is bounded in $D_{1}^{\ast}\setminus(D_{1}\cup\Omega_{R})$ and $D_{1}\setminus D_{1}^{\ast}$ and the fact that the volume of $D_{1}^{\ast}\setminus(D_{1}\cup\Omega_{R})$ and $D_{1}\setminus D_{1}^{\ast}$ is of order $O(\varepsilon)$, it follows from (\ref{con035}) that
\begin{align*}
\mathrm{III}=&\int_{D\setminus(D_{1}\cup D_{1}^{\ast}\cup\Omega_{R})}|\nabla v_{1}|^{2}+O(1)\varepsilon\\
=&\int_{D\setminus(D_{1}\cup D_{1}^{\ast}\cup\Omega_{R})}|\nabla v_{1}^{\ast}|^{2}+2\int_{D\setminus(D_{1}\cup D_{1}^{\ast}\cup\Omega_{R})}\nabla v_{1}^{\ast}\cdot\nabla(v_{1}-v_{1}^{\ast})\\
&+\int_{D\setminus(D_{1}\cup D_{1}^{\ast}\cup\Omega_{R})}|\nabla(v_{1}-v_{1}^{\ast})|^{2}+O(1)\varepsilon\\
=&\int_{\Omega^{\ast}\setminus\Omega^{\ast}_{R}}|\nabla v_{1}^{\ast}|^{2}+O(1)\varepsilon^{\frac{1}{6}}.
\end{align*}
This, together with (\ref{con03365}) and (\ref{con037}), yields that
\begin{align*}
a_{11}=&\int_{\varepsilon^{\bar{\gamma}}<|x'|<R}\frac{dx'}{h_{1}(x')-h(x')}+\int_{|x'|<\varepsilon^{\bar{\gamma}}}\frac{dx'}{\varepsilon+h_{1}(x')-h(x')}\\
&+A+O(1)
\begin{cases}
\varepsilon^{\frac{m-1}{6m}},&n=2,\\
\varepsilon^{\frac{1}{6}},&n\geq3.
\end{cases}
\end{align*}

{\bf Step 2.} Denote
$$\mathbf{Main}:=\int_{\varepsilon^{\bar{\gamma}}<|x'|<R}\frac{dx'}{h_{1}(x')-h(x')}+\int_{|x'|<\varepsilon^{\bar{\gamma}}}\frac{dx'}{\varepsilon+h_{1}(x')-h(x')}.$$

(i) For $m\geq n-1$,
\begin{align*}
\mathbf{Main}=&\int_{|x'|<R}\frac{dx'}{\varepsilon+h_{1}-h}+\int_{\varepsilon^{\bar{\gamma}}<|x'|<R}\frac{\varepsilon\,dx'}{(h_{1}-h)(\varepsilon+h_{1}-h)}\\
=&\int_{|x'|<R}\frac{1}{\varepsilon+\lambda|x'|^{m}}+\int_{|x'|<R}\left(\frac{1}{\varepsilon+h_{1}-h}-\frac{1}{\varepsilon+\lambda|x'|^{m}}\right)+O(1)\varepsilon^{\frac{4m+n-1}{6m}}\\
=&(n-1)\omega_{n-1}\int_{0}^{R}\frac{s^{n-2}}{\varepsilon+\lambda s^{m}}+O(1)\int^{R}_{0}\frac{s^{n-1}}{\varepsilon+\lambda s^{m}}\\
=&
\frac{(n-1)\omega_{n-1}\Gamma^{n}_{m}}{m\lambda^{\frac{n-1}{m}}}
\begin{cases}
\varepsilon^{\frac{n-1}{m}-1}+O(1)\varepsilon^{\frac{n}{m}-1},&m>n,\\
\varepsilon^{-\frac{1}{m}}+O(1)|\ln\varepsilon|,&m=n,\\
|\ln\varepsilon|+O(1),&m=n-1;
\end{cases}
\end{align*}

(ii) For $m<n-1$,
\begin{align*}
\mathbf{Main}=&\int_{|x'|<R}\frac{dx'}{h_{1}-h}-\int_{\varepsilon^{\bar{\gamma}}<|x'|<R}\frac{\varepsilon\,dx'}{(h_{1}-h)(\varepsilon+h_{1}-h)}\\
=&\int_{\Omega_{R}^{\ast}}|\partial_{n}\bar{u}^{\ast}|^{2}+O(1)\varepsilon^{\frac{4m+n-1}{6m}}.
\end{align*}

Therefore, it follows from {\bf Step 1} and {\bf Step 2} that Lemma \ref{lem003} holds.

\subsection{Proof of Theorem \ref{thm001}}
Recalling decomposition (\ref{con007}) and combining the results derived in Theorem \ref{thm002}, Lemma \ref{lem001}, Lemma \ref{lem002} and Lemma \ref{lem003}, we complete the proofs of Theorem \ref{thm001} and Theorem \ref{coro002}.

\end{proof}

\section{Appendix:\,The proof of Theorem \ref{thm002}}

In light of assumptions ({\bf{\em H1}}) and ({\bf{\em H2}}), it follows from a direct calculation that for $i=1,\cdots,n-1$, $x\in\Omega_{2R}$,
\begin{align}
|\partial_{i}\bar{v}|\leq&\frac{C|\psi(x',\varepsilon+h_{1}(x'))|}{\sqrt[m]{\varepsilon+|x'|^{m}}}+C\|\nabla\psi\|_{L^{\infty}(\partial D_{1})},\label{ADE001}\\
|\partial_{n}\bar{v}|=&\frac{|\psi(x',\varepsilon+h_{1}(x'))|}{\delta(x')},\quad\partial_{nn}\bar{v}=0,\label{ADE002}
\end{align}
and
\begin{align}\label{ADE003}
|\Delta\bar{v}|\leq\frac{|\psi(x',\varepsilon+h_{1}(x'))|}{(\varepsilon+|x'|^{m})^{\frac{2}{m}}}+\frac{\|\nabla\psi\|_{L^{\infty}(\partial D_{1})}}{\sqrt[m]{\varepsilon+|x'|^{m}}}+\|\nabla^{2}\psi\|_{L^{\infty}(\partial D_{1})}.
\end{align}
Here and throughout this section, for simplicity of notations, we use $\|\nabla\psi\|_{L^{\infty}}$, $\|\nabla^{2}\psi\|_{L^{\infty}}$ and $\|\psi\|_{C^{2}}$ to denote $\|\nabla\psi\|_{L^{\infty}(\partial{D}_{1})}$, $\|\nabla^{2}\psi\|_{L^{\infty}(\partial{D_1})}$ and $\|\psi\|_{C^{2}(\partial D_{1})}$, respectively.

Define
\begin{equation}\label{def_w}
w:=v-\bar{v}.
\end{equation}

\noindent{\bf STEP 1.}
Let $v\in H^1(\Omega)$ be a weak solution of (\ref{con008}). Then
\begin{align}\label{ADE005}
\int_{\Omega}|\nabla w|^2dx\leq C\|\psi\|_{C^{2}(\partial D_{1})}^{2}.
\end{align}

Invoking (\ref{def_w}), $w$ satisfies
\begin{align}\label{ADE0006}
\begin{cases}
\Delta w=-\Delta\bar{v},&
\hbox{in}\  \Omega,  \\
w=0, \quad&\hbox{on} \ \partial\Omega.
\end{cases}
\end{align}
Multiplying the equation in (\ref{ADE0006}) and integrating by parts, it follows from the Poincar\'{e} inequality, Sobolev trace embedding theorem, (\ref{KK6}) and (\ref{ADE001})--(\ref{ADE002}) that
\begin{align*}
\int_{\Omega}|\nabla w|^{2}=&\int_{\Omega_{R}}\omega\Delta\bar{v}+\int_{\Omega\setminus\Omega_{R}}\omega\Delta\bar{v}\\
\leq&\sum^{n-1}_{i=1}\left|\int_{\Omega_{R}}\omega\partial_{ii}\bar{v}\right|+C\|\psi\|_{C^{2}}\int_{\Omega\setminus\Omega_{R}}|w|\\
\leq&C\|\nabla w\|_{L^{2}(\Omega_{R})}\|\nabla_{x'}\bar{v}\|_{L^{2}(\Omega_{R})}+C\|\psi\|_{C^{2}(\partial D_{1})}\|\nabla w\|_{L^{2}(\Omega\setminus\Omega_{R})}\\
\leq&C\|\psi\|_{C^{2}(\partial D_{1})}\|\nabla w\|_{L^{2}(\Omega)}.
\end{align*}
Then (\ref{ADE005}) is proved.

\noindent{\bf STEP 2.}
Proof of
\begin{align}\label{step2}
 \int_{\Omega_\delta(z')}|\nabla w|^2dx
 &\leq C\delta^{n+2-\frac{4}{m}}\left(|\psi(z',\varepsilon+h_{1}(z'))|^2+\delta^{\frac{2}{m}}\|\psi\|_{C^2(\partial D_1)}^2\right),
\end{align}
where $\delta$ is defined by (\ref{ZZW666}). As seen in \cite{LX2017}, we have the iteration formula as follows:
\begin{align*}
\int_{\Omega_{t}(z')}|\nabla w|^{2}dx\leq\frac{C}{(s-t)^{2}}\int_{\Omega_{s}(z')}|w|^{2}dx+C(s-t)^{2}\int_{\Omega_{s}(z')}|\Delta\bar{v}|^{2}dx.
\end{align*}

We next divide into two cases to prove (\ref{step2}).

{\bf Case 1.} If $|z'|<\varepsilon^{\frac{1}{m}},\,0<s<\varepsilon^{\frac{1}{m}}$, we have $\varepsilon\leq\delta(x')\leq C\varepsilon$ in $\Omega_{\sqrt[m]{\varepsilon}}(z')$. In light of (\ref{ADE003}), we derive
\begin{align}\label{ADE006}
&\int_{\Omega_{s}(z')}|\Delta\bar{v}|^{2}\leq C|\psi(z',\varepsilon+h_{1}(z'))|^{2}\frac{s^{n-1}}{\varepsilon^{\frac{4}{m}-1}}+Cs^{n-1}\varepsilon^{1-\frac{2}{m}}\|\psi\|^{2}_{C^{2}},
\end{align}
while, due to the fact that $w=0$ on $\Gamma^{-}_{R}:=\{x\in\mathbb{R}^{n}|\,x_{n}=h(x'),\,|x'|<R\}$,
\begin{align}\label{ADE007}
\int_{\Omega_{s}(z')}|w|^{2}\leq C\varepsilon^{2}\int_{\Omega_{s}(z')}|\nabla w|^{2}.
\end{align}
Denote
$$F(t):=\int_{\Omega_{t}(z')}|\nabla w_{1}|^{2}.$$
It follows from (\ref{ADE006}) and (\ref{ADE007}) that for $0<t<s<\varepsilon^{\frac{1}{m}}$,
\begin{align}\label{ADE008}
F(t)\leq &\left(\frac{c_1\varepsilon}{s-t}\right)^2F(s)+C(s-t)^2s^{n-1}\bigg(\frac{|\psi(z',\varepsilon+h_{1}(z'))|^2}{\varepsilon^{\frac{4-m}{m}}}+\frac{\|\psi\|_{C^2}}{\varepsilon^{\frac{2-m}{m}}}\bigg),
\end{align}
where $c_{1}$ and $C$ are universal constants.

Pick $k=\left[\frac{1}{4c_{1}\sqrt[m]{\varepsilon}}\right]+1$ and $t_{i}=\delta+2c_{1}i\varepsilon,\;i=0,1,2,\cdots,k$. Then, (\ref{ADE008}), together with $s=t_{i+1}$ and $t=t_{i}$, leads to
$$F(t_{i})\leq\frac{1}{4}F(t_{i+1})+C(i+1)^{n-1}\varepsilon^{n+2-\frac{4}{m}}\left[|\psi(z',\varepsilon+h_{1}(z'))|^{2}+\varepsilon^{\frac{2}{m}}\|\psi\|^{2}_{C^{2}}\right].$$
It follows from $k$ iterations and (\ref{ADE005}) that for a sufficiently small $\varepsilon>0$,
\begin{align}\label{ADE009}
F(t_{0})\leq C\varepsilon^{n+2-\frac{4}{m}}\left(|\psi(z',\varepsilon+h_{1}(z'))|^{2}+\varepsilon^{\frac{2}{m}}\|\psi\|^{2}_{C^{2}}\right).
\end{align}

{\bf Case 2.} If $\varepsilon^{\frac{1}{m}}\leq|z'|\leq R$ and $0<s<\frac{2|z'|}{3}$, we have $\frac{|z'|^{m}}{C}\leq\delta(x')\leq C|z'|^{m}$ in $\Omega_{\frac{2|z'|}{3}}(z')$. Similar to (\ref{ADE006}) and (\ref{ADE007}), we obtain
\begin{align*}
\int_{\Omega_{s}(z')}|\Delta\bar{v}|^{2}\leq&C|\psi(z',\varepsilon+h_{1}(z'))|^{2}\frac{s^{n-1}}{|z'|^{4-m}}+Cs^{n-1}|z'|^{m-2}\|\psi\|^{2}_{C^{2}},
\end{align*}
and
$$\int_{\Omega_{s}(z')}|w|^{2}\leq C|z'|^{2m}\int_{\Omega_{s}(z')}|\nabla w|^{2}.$$
Moreover, for $0<t<s<\frac{2|z'|}{3}$, estimate (\ref{ADE008}) becomes
\begin{align*}
F(t)\leq\left(\frac{c_{2}|z'|^{m}}{s-t}\right)^{2}F(s)+C(s-t)^{2}s^{n-1}\left(\frac{|\psi(z',\varepsilon+h_{1}(z'))|^{2}}{|z'|^{4-m}}+|z'|^{m-2}\|\psi\|_{C^{2}}^{2}\right).
\end{align*}

Similarly as above, pick $k=\left[\frac{1}{4c_{2}|z'|}\right]+1,\,t_{i}=\delta+2c_{2}i|z'|^{m},\,i=0,1,2,\cdots,k$ and take $s=t_{i+1},\;t=t_{i}$. Then, we obtain
$$F(t_{i})\leq\frac{1}{4}F(t_{i+1})+C(i+1)^{n-1}|z'|^{m(n+2)-4}\left(|\psi(z',\varepsilon+h_{1}(z'))|^{2}+|z'|^{2}\|\psi\|^{2}_{C^{2}}\right).$$
Likewise, by using $k$ iterations, we have
\begin{align}\label{ADE010}
F(t_{0})\leq C|z'|^{m(n+2)-4}\left(|\psi(z',\varepsilon+h_{1}(z'))|^{2}+|z'|^{2}\|\psi\|^{2}_{C^{2}}\right).
\end{align}
Consequently, (\ref{ADE010}), together with (\ref{ADE009}), yields that (\ref{step2}) holds.

\noindent{\bf STEP 3.}
Proof of
\begin{align}\label{ADE011}
|\nabla w(x)|\leq C\delta^{1-\frac{2}{m}}(|\psi(x',\varepsilon+h_{1}(x'))|+\delta^{\frac{1}{m}}\|\psi\|_{C^{2}(\partial D_{1})}),\quad\mathrm{in}\;\Omega_{R}.
\end{align}

As in \cite{LX2017}, combining the rescaling argument, Sobolev embedding theorem, $W^{2,p}$ estimate and bootstrap argument, we obtain
\begin{align*}
\|\nabla w\|_{L^{\infty}(\Omega_{\delta/2}(z'))}\leq\frac{C}{\delta}\left(\delta^{1-\frac{n}{2}}\|\nabla w\|_{L^{2}(\Omega_{\delta}(z'))}+\delta^{2}\|\Delta\bar{v}\|_{L^{\infty}(\Omega_{\delta}(z'))}\right).
\end{align*}
In view of (\ref{ADE003}) and (\ref{step2}), we obtain that for $|z'|\leq R$,
$$\delta\|\Delta\bar{v}\|_{L^{\infty}(\Omega_{\delta}(z'))}\leq C\delta^{1-\frac{2}{m}}(|\psi(z',\varepsilon+h_{1}(z'))|+\delta^{\frac{1}{m}}\|\psi\|_{C^{2}}),$$
and
\begin{align*}
\delta^{-\frac{n}{2}}\|\nabla w\|_{L^{2}(\Omega_{\delta}(z'))}\leq C\delta^{1-\frac{2}{m}}(|\psi(z',\varepsilon+h_{1}(z'))|+\delta^{\frac{2}{m}}\|\psi\|_{C^{2}}).
\end{align*}
Consequently, for $h(z')<z_{n}<\varepsilon+h_{1}(z')$,
$$|\nabla w(z',z_{n})|\leq C\delta^{1-\frac{2}{m}}(|\psi(z',\varepsilon+h_{1}(z'))|+\delta^{\frac{1}{m}}\|\psi\|_{C^{2}}).$$
Estimate (\ref{con013}) is established. On the other hand, it follows from the standard interior estimates and boundary estimates for the Laplace equation that
\begin{align*}
\|\nabla v\|_{L^{\infty}(\Omega\setminus\Omega_{R})}\leq C\|\psi\|_{C^{2}(\partial D_{1})}.
\end{align*}
Thus, Theorem \ref{thm002} is proved.

\noindent{\bf{\large Acknowledgements.}} The author is greatly indebted to Professor HaiGang Li for his constant encouragement and very helpful discussions. The author was partially supported by NSFC (11971061) and BJNSF (1202013).

\bibliographystyle{plain}

\begin{thebibliography}{10}
\bibitem{ABTV2015}  H. Ammari; E. Bonnetier; F. Triki; M. Vogelius, Elliptic estimates in composite media with smooth inclusions: an integral equation approach, Ann. Sci. \'{E}c. Norm. Sup\'{e}r. (4) 48 (2) (2015) 453-495.

\bibitem{ACKLY2013} H. Ammari; G. Ciraolo; H. Kang; H. Lee; K. Yun, Spectral analysis of the Neumann-Poincar®¶ operator and characterization of the stress concentration in anti-plane elasticity. Arch. Ration. Mech. Anal.  208  (2013),  275-304.

\bibitem{AKL2005} H. Ammari; H. Kang; M. Lim, Gradient estimates to the conductivity problem. Math.
Ann. 332 (2005), 277-286.

\bibitem{AKLLL2007} H. Ammari; H. Kang; H. Lee; J. Lee; M. Lim, Optimal estimates for the electric field in two dimensions. J. Math. Pures Appl. (9) 88 (2007), no. 4, 307-324.

\bibitem{AKLLZ2009} H. Ammari; H. Kang; H. Lee; M. Lim; H. Zribi, Decomposition theorems and fine estimates for electrical
fields in the presence of closely located circular inclusions. J.
Differential Equations 247 (2009), 2897-2912.

\bibitem{BASL1999} I. Babu\u{s}ka; B. Andersson; P. Smith; K.
Levin, Damage analysis of fiber composites. I. Statistical analysis
on fiber scale. Comput. Methods Appl. Mech. Engrg. 172 (1999), 27-77.

\bibitem{BC1984} B. Budiansky; G.F. Carrier, High shear stresses in stiff fiber composites, J. App. Mech. 51 (1984), 733-735.

\bibitem{BCN2013} M. Briane; Y. Capdeboscq; L. Nguyen, Interior regularity estimates in high conductivity homogenization and application. Arch. Ration. Mech. Anal. 207 (1) (2013) 75-137.

\bibitem{BJL2017} J.G. Bao; H.J. Ju; H.G. Li, Optimal boundary gradient estimates for Lam\'{e} systems with partially infinite coefficients. Adv. Math. 314 (2017), 583-629.

\bibitem{BLL2015} J.G. Bao; H.G. Li; Y.Y. Li, Gradient estimates for solutions of the Lam\'{e} system with partially infinite coefficients. Arch. Ration. Mech. Anal.  215  (2015),  no. 1, 307-351.

\bibitem{BLL2017} J.G. Bao; H.G. Li; Y.Y. Li, Gradient estimates for solutions of the Lam\'{e} system with partially infinite coefficients in dimensions greater than two. Adv. Math. 305 (2017), 298-338.

\bibitem{BLY2009} E. Bao; Y.Y. Li; B. Yin, Gradient estimates for the perfect conductivity problem. Arch. Ration. Mech. Anal. 193 (2009), 195-226.

\bibitem{BLY2010} E. Bao; Y.Y. Li; B. Yin, Gradient estimates for the perfect and insulated conductivity problems with multiple inclusions. Comm. Partial Differential Equations 35 (2010), 1982-2006.

\bibitem{BT2012} E. Bonnetier; F. Triki, Pointwise bounds on the gradient and the spectrum of the Neumann-Poincar\'{e} operator: the case of 2 discs, Multi-scale and high-contrast PDE: from modeling, to mathematical analysis, to inversion, Contemp. Math., 577, Amer. Math. Soc., Providence, RI, 2012, pp. 81-91.

\bibitem{BT2013}E. Bonnetier; F. Triki, On the spectrum of the Poincar\'{e} variational problem for two close-to-touching inclusions in 2D. Arch. Ration. Mech. Anal. 209 (2013), no. 2, 541-567.

\bibitem{BV2000} E. Bonnetier; M. Vogelius, An elliptic regularity result for a composite medium
with ``touching'' fibers of circular cross-section. SIAM J. Math. Anal. 31 (2000), 651-677.

\bibitem{DL2019} H.J. Dong; H.G. Li, Optimal estimates for the conductivity problem by Green's function method. Arch. Ration. Mech. Anal. 231 (2019), no. 3, 1427-1453.

\bibitem{G2012} Y. Gorb; A. Novikov, Blow-up of solutions to a p-Laplace equation. Multiscale Model. Simul. 10 (2012), no. 3, 727-743.

\bibitem{G2015} Y. Gorb, Singular behavior of electric field of high-contrast concentrated composites. Multiscale Model. Simul. 13 (2015), no. 4, 1312-1326.

\bibitem{K1993} J.B. Keller, Stresses in narrow regions, Trans. ASME J. APPl. Mech. 60 (1993) 1054-1056.

\bibitem{KY2019} H. Kang; S. Yu, Quantitative characterization of stress concentration in the presence of closely spaced hard inclusions in two-dimensional linear elasticity, Arch. Ration. Mech. Anal. 232 (2019), 121-196.

\bibitem{KLY2013}  H. Kang; M. Lim; K. Yun, Asymptotics and computation of the solution to the conductivity equation in the presence of adjacent inclusions with extreme conductivities. J. Math. Pures Appl. (9) 99 (2013), 234-249.

\bibitem{KLY2014}  H. Kang; M. Lim; K. Yun,  Characterization of the electric field concentration between two adjacent spherical perfect conductors.  SIAM J. Appl. Math. 74 (2014), 125-146.

\bibitem{KLY2015}  H. Kang; H. Lee; K. Yun, Optimal estimates and asymptotics for the stress concentration between closely located stiff inclusions. Math. Ann. 363 (2015), no. 3-4, 1281-1306.

\bibitem{L2012} J. Lekner, Electrostatics of two charged conducting spheres. Proc. R. Soc. Lond. Ser. A Math. Phys. Eng. Sci. 468 (2012), no. 2145, 2829-2848.

\bibitem{LLBY2014} H.G. Li; Y.Y. Li; E.S. Bao; B. Yin, Derivative estimates of solutions of elliptic systems in narrow regions. Quart. Appl. Math.  72  (2014),   no. 3, 589-596.

\bibitem{LLY2019} H.G. Li; Y.Y. Li; Z.L. Yang, Asymptotics of the gradient of solutions to the perfect conductivity problem. Multiscale Model. Simul. 17 (2019), no. 3, 899-925.

\bibitem{LWX2019} H.G. Li; F. Wang; L.J. Xu, Characterization of electric fields between two spherical perfect conductors with general radii in 3D. J. Differential Equations 267 (2019), no. 11, 6644-6690.

\bibitem{LX2017} H.G. Li; L.J. Xu, Optimal estimates for the perfect conductivity problem with inclusions close to the boundary. SIAM J. Math. Anal. 49 (2017), no. 4, 3125-3142.

\bibitem{LY2009} M. Lim; K. Yun, Blow-up of electric fields between closely spaced spherical perfect conductors, Comm. Partial Differential Equations, 34 (2009), pp. 1287-1315.

\bibitem{LY2015} M. Lim; S. Yu, Asymptotics of the solution to the conductivity equation in the presence of adjacent circular inclusions with finite conductivities. J. Math. Anal. Appl. 421 (2015), no. 1, 131-156.

\bibitem{M1996} X. Markenscoff, Stress amplification in vanishingly small geometries. Computational Mechanics 19 (1996), 77-83.

\bibitem{MMN2007} V.G. Maz'ya, A.B. Movchan, M.J. Nieves, Uniform asymptotic formulae for Green's tensors in
elastic singularly perturbed domains, Asymptot. Anal. 52 (2007) 173-206.

\bibitem{Y2007} K. Yun, Estimates for electric fields blown up between closely adjacent conductors with arbitrary shape. SIAM J. Appl. Math. 67 (2007),  714-730.

\bibitem{Y2009} K. Yun, Optimal bound on high stresses occurring between stiff fibers with arbitrary shaped cross-sections. J. Math. Anal. Appl. 350 (2009), 306-312.












\end{thebibliography}

\def\cprime{$'$}

\end{document}